\numberwithin{equation}{section}
\theoremstyle{plain}
\newtheorem{theorem}{Theorem}[section]
\newtheorem{lemma}{Lemma}[section]
\theoremstyle{definition}
\numberwithin{equation}{section}
\def\tht{\theta}
\def\om{\omega}
\def\e{\varepsilon}
\def\g{\gamma}
\def\l{\lambda}
\def\p{\partial}
\def\D{\Delta}
\def\a{\alpha}
\def\b{\beta}
\def\z{\zeta}
\def\vp{\varphi}
\def\H{W_2}
\def\Ho{\mathring{H}_2}
\def\iu{\mathrm{i}}
\def\di{\,d}
\def\Op{\mathcal{H}}
\def\tauo{\tfrac{1}{2}}
\def\Beta{\mathrm{B}}
\def\cL{\mathcal{L}}
\def\cS{\mathcal{S}}
\def\H{W_2}
\def\Ho{\mathring{W}_2}
\def\Hoper{\mathring{W}_{2,per}}
\DeclareMathOperator{\spec}{\sigma}
\begin{document}

\title{Bethe-Sommerfeld conjecture for periodic Schr\"odinger operators in strip}
\author{D.I. Borisov}

\date{\empty}

\maketitle

\allowdisplaybreaks

\begin{abstract}
We consider the Dirichlet Laplacian in a straight planar strip perturbed by a bounded periodic symmetric operator. We prove the classical Bethe-Sommerfeld conjecture for this operator, namely, that this operator has finitely many gaps in its spectrum provided a certain special function written as a series satisfies some lower bound. We show that this is indeed the case if the ratio of the  period and the width of strip is less than a certain explicit  number, which is approximately equal to 0.10121. We also find explicitly the point in the spectrum, above which there is no internal gaps. We then study the case of a sufficiently small period and we prove that in such case the considered operator has no internal gaps in the spectrum. The conditions ensuring the absence are written as certain explicit inequalities.
\end{abstract}

\footnotetext[1]{Institute of Mathematics, Ufa Federal Research Center, Russian Academy of Sciences, Ufa, Russia, email: borisovdi@yandex.ru
\\
Bashkir State University, Ufa, Russia
\\
University of Hradec Kr\'alov\'e,  Hradec Kr\'alov\'e, Czech Republic
}

\section{Introduction}

The classical Bethe-Sommerfeld conjecture says that a multi-dimensional periodic differential operator has finitely many gaps in its spectrum. This conjecture was proved for a wide class of operators in multi-dimensional spaces.  The case of Schr\"odinger operator with a periodic potential or, more generally, with a bounded periodic symmetric operator, was studied in \cite{DT},
\cite{SS05}, \cite{P08}, \cite{HM98}, \cite{SS05-2}, \cite{S85}, \cite{Vel}, \cite{Kar} and the Bethe-Sommerfeld conjecture was proved under various conditions for the potential and the bounded periodic symmetric operator including various cases of unbounded potentials.

In \cite{K04}, \cite{M97}, this conjecture was proved for the magnetic Schr\"odinger operator. Papers  \cite{PS01}, \cite{BP09}, \cite{PS10} were devoted to proving the Bethe-Sommerfeld conjecture for polyharmonic operators perturbed by a pseudodifferential operator of a lower order obeying certain conditions.

Apart of operators in multi-dimensional spaces, the Bethe-Sommerfeld conjecture can be formulated also for differential operators in periodic domains. The examples of such domains are strips, cylinders or layers. Here the simplest model is the periodic Schr\"odinger operator in a two-dimensional planar strip. Such model was studied in PhD thesis \cite{Be}. Assuming that
\begin{equation}\label{1.1}
\frac{T}{d}<\frac{1}{16},
\end{equation}
where $d$ was the width of the strip and $2T$ was the period, it was proved in \cite{Be} that the considered operator has finitely many gaps in the spectrum. We also note that this result appeared only in the cited PhD thesis and was not published  as a usual paper in a journal.

One more example of differential operators on periodic domains are the operators on periodic graphs. Here the situation changes substantially and the operators on the graphs can have infinitely many gaps, or finitely many gaps  or no gaps at all. More details on the Bethe-Sommerfeld conjecture for operators on periodic graphs can be found, for instance, in recent work \cite{ET} devoted to finding examples of quantum graphs with finitely many gaps in the spectra.

The Bethe-Sommerfeld conjecture can be interpreted as the absence of the gaps above some point, that is, in the higher part of the spectrum. This suggests another problem on finding the periodic operators having no gaps at all; such problem can be called  a strong Bethe-Sommerfeld conjecture. This issue was studied, for instance, in \cite{DT} and \cite{S85}. It was found that for the periodic Schr\"odinger operators in the multi-dimensional space this is true provided the potential is small enough, see Remark in \cite{DT} and Theorems~15.2~and~15.6 in \cite[Ch. I\!I\!I, Sect. 15]{S85}. By a simple rescaling, this result can be also reformulated as follows: the periodic Schr\"odinger operator in entire space has no gaps if the period is small enough.

The aforementioned results on the strong Bethe-Sommerfeld conjecture motivated very recent studies on periodic operators with a small period in \cite{UMJ18}, \cite{TMF18}, \cite{DFDE17}, \cite{IMM18}.
The considered operators were a periodic Schr\"odinger operator \cite{UMJ18}, a periodic magnetic Schr\"odinger operator \cite{DFDE17}, the Laplacian with frequently alternating boundary conditions \cite{TMF18} and the Laplacian with a periodic delta interaction \cite{IMM18}. The main result of the cited works was as follows: for a sufficiently small period, as $T<T_0$, the considered operators has no internal spectral gaps at least till certain point $\l_T$ in the spectrum. The upper bound $T_0$ for the period ensuring this result was found explicitly, as a particular number. The point $\l_T$ was also found explicitly as a rather simple function of $T$. It was shown that $\l_T$ behaved as $O(T^{-6})$ as $T$ goes to zero. We stress that this result does not state the absence of the gaps in the entire spectrum but only in its lower part. At the same time, we succeeded to consider more complicated operators and not only the periodic Schr\"odinger operator. Here it is important to stress that the approach used in \cite{Be} is rather limited and it can not be extended to the operators with stronger perturbations like in \cite{TMF18}, \cite{DFDE17}, \cite{IMM18}. The technique in the latter works, namely, the key estimates, were based on different ideas in comparison with that in \cite{Be}.

In the present work we study the same model as in \cite{Be}, namely, the Dirichlet Laplacian in a strip perturbed by a bounded periodic symmetric operator. We again study the internal gaps in the spectrum but in greater details.   Our first result is the proof of the classical Bethe-Sommerfeld conjecture under weaker conditions. Namely, we show that it is true provided
\begin{equation}\label{1.2}
\frac{T}{d}<\xi_0\approx  0.10121
\end{equation}
and this condition is better than (\ref{1.1}). For other values of $\frac{T}{d}$, the classical Bethe-Sommerfeld conjecture holds if  a certain special function written explicitly as a series satisfies certain lower bound, see (\ref{2.4}), (\ref{2.3}). Here we also find  explicitly a point in the spectrum, above which there is surely no gaps.

We also study the case of a small period. Here we prove that   if condition (\ref{1.2}) is satisfied and the numerical range of the perturbation is not too wide, the considered operator has no internal gaps in the spectrum. The conditions for the perturbation are  explicit and rather simple, see (\ref{2.9}), (\ref{2.10}). In particular, these conditions imply the following statement: varying the period of a potential and keeping its oscillation uniformly bounded, for sufficiently small periods the corresponding periodic Schr\"odinger operator in the strip has no internal gaps. This result fits very well what was said above about periodic Schr\"odinger operators in multi-dimensional space with small periods.

The approach we use follows the same lines as in
\cite{Be}, namely, it is based on the ideas from \cite{DT}.  But while proving the key estimates for the Fourier coefficients of the counting function, we succeeded to do this in a shorter and simpler way tracking at the same time all the constants explicitly. In the proof of the strong Bethe-Sommerfeld conjecture we also employ the approach developed in \cite{UMJ18}, \cite{TMF18}, \cite{DFDE17},  \cite{IMM18}.

\section{Problem and main results}

Let $x=(x_1,x_2)$ be Cartesian coordinates in $\mathds{R}^2$, $\Pi:=\{x:\, 0<x_2<d\}$ be an infinite horizontal strip of a width $d>0$, and $\square:=\{x:\, |x_1|<T, 0<x_2<d\}$ be a periodicity cell, where $T>0$ is a constant. By $\cL_0$ we denote a bounded symmetric operator in $L_2(\square)$ and $\cS(n)$ stands for the translation operator in $L_2(\Pi)$ acting as $(\cS(n)u)(x)=u(x_1-2 T n, x_2)$. By means of the operators $\cL_0$ and $\cS(n)$ we introduce one more operator in $L_2(\Pi)$:
\begin{equation*}
\cL u=\cS(-n)\cL_0\cS(n)u\quad\text{on}\quad \square_n,\quad n\in\mathds{Z},
\end{equation*}
where $\square_n:=\{x:\,(x_1-2Tn,x_2)\in\square\}$. This definition of the operator $\cL$ can be explained as follows: the restriction on $\square_n$ of a function $u\in L_2(\Pi)$ belongs to $L_2(\square)$. Identifying then the spaces $L_2(\square)$ and $L_2(\square_n)$, we apply the operator $\cL_0$ to the restriction $u\big|_{\square_n}$ and the result is translated to the cell $\square_n$. This is the action of the operator $\cL$ on $u$ on the cell $\square_n$.

The operator $\cL$ is bounded, symmetric and periodic. The latter is understood in the sense of the identity
\begin{equation*}
\cS(m)\cL=\cL\cS(m)\quad\text{for each}\quad m\in\mathds{Z}.
\end{equation*}

The main object of our study is the periodic operator
\begin{equation*}
\Op:=-\D+\cL\quad\text{in}\quad L_2(\Pi)
\end{equation*}
subject to the Dirichlet condition. The domain of this operator is the Sobolev space $\Ho^2(\Pi)$ consisting of the functions in $\H^2(\Pi)$ with the zero trace on $\p\Pi$. The operator $\Op$ is self-adjoint.

We denote
\begin{equation}\label{2.5}
\om_-:= \inf\limits_{\substack{u\in L_2(\square) \\ u\ne 0}}  \frac{(\cL u,u)_{L_2(\square)}}{\|u\|_{L_2(\square)}^2},\quad
\om_+:= \sup\limits_{\substack{u\in L_2(\square) \\ u\ne 0}}  \frac{(\cL u,u)_{L_2(\square)}}{\|u\|_{L_2(\square)}^2},\quad
\om_\cL:=\om_+ - \om_-,
\end{equation}
and
\begin{equation*}
\xi:=\frac{T}{d}.
\end{equation*}
Given $\xi>0$, for  $\ell>0$, $p\in\mathds{N}$ we introduce the function:
\begin{equation}\label{2.4}
\vp_p(\ell):=\frac{1}{\pi\xi}  \sum\limits_{k\in\mathds{Z}}
\frac{\sin\left( 2\pi  \ell^{\frac{1}{2}} \sqrt{\frac{k^2}{\xi^2}+p^2} -\frac{\pi}{4}\right)}{\left(\frac{k^2}{\xi^2}+p^2\right)^\frac{3}{4}}.
\end{equation}

\begin{theorem}\label{th2.1}
Assume that for a given $\xi$ there exists three constants $c_0=c_0(\xi)>0$, $\ell_0=\ell_0(\xi)\geqslant 1$ and $\g=\g(\xi)<\tfrac{1}{4}$ such that for $\ell\geqslant \ell_0$ the inequality holds:
\begin{equation}\label{2.3}
\sup\limits_{p\in\mathds{N}}|\vp_p(\xi,\ell)|\geqslant c_0 \ell^{-\g}.
\end{equation}
Then, for this $\xi$, the  spectrum of the operator $\Op$ has finitely many internal gaps. Moreover, there are no internal gaps in the half-line $[\ell_{1},+\infty)$, where
\begin{equation}
\begin{aligned}
\ell_{1}:=&\frac{\pi^2}{T^2}\max\Bigg\{\ell_0,\left(\frac{4\sqrt{2}\pi+6}{3\pi c_0}\right)^\frac{4}{1-4\g},\left(\frac{1}{8\pi^2\xi c_0}\sqrt{9+\frac{25}{1024\pi^2}}\right)^\frac{2}{1-2\g},
\\
 &\hphantom{\frac{\pi^2}{T^2}\max\Bigg\{} \left(\frac{T}{4 \pi c_0 \xi}\om_\cL+\frac{1}{2c_0} + \frac{3\xi T}{4\pi c_0}\right)^{\frac{4}{1-4\g}}\Bigg\}
+\om_-.
\end{aligned}
\label{2.11}
\end{equation}
\end{theorem}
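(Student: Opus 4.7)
The plan is to adapt the Dinaburg--Tomashev strategy from \cite{DT} and \cite{Be}. Via the Gelfand transform, decompose $\Op$ into a direct integral of fiber operators $\Op(\tau)$ on $L_2(\square)$ parametrised by the quasi-momentum $\tau\in[-\tfrac{\pi}{2T},\tfrac{\pi}{2T})$; each $\Op(\tau)$ has a purely discrete spectrum, and $\l\in\spec(\Op)$ if and only if $\l$ is an eigenvalue of $\Op(\tau)$ for some $\tau$. Let $N(\l,\tau)$ denote the number of eigenvalues of $\Op(\tau)$ strictly below $\l$ and set
\begin{equation*}
\overline{N}(\l):=\frac{T}{\pi}\int\limits_{-\pi/(2T)}^{\pi/(2T)} N(\l,\tau)\di\tau.
\end{equation*}
Any gap $(\l_1,\l_2)\subset\mathds{R}\setminus\spec(\Op)$ forces $\overline{N}$ to be constant on this interval, so to rule out internal gaps in $[\ell_{1},+\infty)$ it suffices to prove that, for each $\l\geqslant\ell_{1}$, the function $\overline{N}$ is not constant on any neighbourhood of $\l$.

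The next step compares $\Op$ with its unperturbed part $-\D$. Since $\om_-\leqslant\cL\leqslant\om_+$ in the form sense, the min--max principle yields the pointwise (in $\tau$) two-sided estimate
\begin{equation*}
N_0(\l-\om_+,\tau)\leqslant N(\l,\tau)\leqslant N_0(\l-\om_-,\tau),
\end{equation*}
where $N_0(\cdot,\tau)$ is the counting function of the corresponding fiber of $-\D$ subject to Dirichlet conditions on $\p\Pi$ and quasi-periodic conditions on the lateral edges of $\square$. Integrating in $\tau$ gives the same sandwich for $\overline{N}$. Consequently, a jump of $\overline{N}_0$ at some $\l_*$ whose size exceeds the total variation of the smooth background of $\overline{N}_0$ on a window of length $\om_\cL$ containing $\l_*$ forces $\overline{N}$ itself to jump on a translated window, excluding a gap there.

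The unperturbed fiber eigenvalues are $\l_{k,p}(\tau)=(\tfrac{\pi k}{T}+\tau)^2+(\tfrac{\pi p}{d})^2$ with $k\in\mathds{Z}$, $p\in\mathds{N}$. Summing in $k$, averaging over $\tau$, and applying the Poisson summation formula splits $\overline{N}_0(\l)$ into a smooth Weyl-type part and an oscillatory remainder. A short stationary-phase computation, with $\ell=\l T^2/\pi^2$ as the natural dimensionless spectral parameter, identifies the leading oscillatory contribution, term by term in $p$, with the series $\vp_p(\ell)$; combined with the hypothesis (\ref{2.3}) this gives a lower bound of order $c_0\ell^{-\g}$ on the amplitude of the oscillations of $\overline{N}_0$ in $\l$.

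The main work lies in converting this oscillation bound into a quantitative \emph{gap} statement, i.e.\ in showing that $c_0\ell^{-\g}$ strictly dominates every competing error. The four expressions in the maximum in (\ref{2.11}) correspond precisely to the conditions that must hold simultaneously: the first reproduces the validity range $\ell\geqslant\ell_{0}$ of (\ref{2.3}); the next two come from controlling, respectively, the slowly varying Weyl-type background and the tails of the Poisson series together with the stationary-phase remainder, each of which decays as an explicit power of $\ell$ and the exponents $4/(1-4\g)$, $2/(1-2\g)$ arise by balancing these powers against $c_0\ell^{-\g}$; the last term incorporates the perturbation window $\om_\cL$ through the sandwich inequality for $\overline{N}$. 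The hardest step is to estimate the Poisson remainder and the stationary-phase error while \emph{tracking all constants explicitly}, since the coefficients of $T$, $\xi$ and $\om_\cL$ in (\ref{2.11}) must emerge exactly. Once each error is dominated by $c_0\ell^{-\g}$, the oscillation of $\overline{N}_0$ survives the transition to $\overline{N}$ via the sandwich, producing a strict increase of $\overline{N}$ in every window of length $\om_\cL$ above $\ell_{1}$ and thus ruling out any internal gap.
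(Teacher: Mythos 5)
Your reduction to the integrated density of states does not work, and this is the central gap. Averaging over the quasimomentum, $\overline{N}(\l)=\int N(\l,\tau)\di\tau$, and then sandwiching $\overline{N}_0(\l-\om_+)\leqslant\overline{N}(\l)\leqslant\overline{N}_0(\l-\om_-)$ can only exclude gaps of length \emph{greater} than $\om_\cL$: on a putative gap $(\l_1,\l_2)$ the sandwich gives $0=\overline{N}(\l_2)-\overline{N}(\l_1)\geqslant \overline{N}_0(\l_2-\om_+)-\overline{N}_0(\l_1-\om_-)$, which is a contradiction only when $\l_2-\l_1>\om_\cL$, no matter how much you know about the oscillations of $\overline{N}_0$. (Also, $\overline{N}_0$ is continuous --- the band functions are continuous in $\tau$ --- so the ``jump of $\overline{N}_0$'' you invoke does not exist.) The quantity that actually controls gaps of arbitrary size is the \emph{spread} $\sup_\tau N_0(\ell,\tau)-\inf_\tau N_0(\ell,\tau)$, i.e.\ the overlap $\tht_k^0-\eta_{k+1}^0$ of consecutive unperturbed bands: once this overlap exceeds $\om_\cL$, the minimax bounds (\ref{5.31}) on the band edges force the perturbed bands to overlap as well. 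The paper's proof is built entirely around bounding this spread from below, via Lemma~\ref{lm4.1}, which states $\sup_\tau N_0\geqslant a_0+\tfrac{1}{2}\sup_p|a_p|$ and $\inf_\tau N_0\leqslant a_0-\tfrac{1}{2}\sup_p|a_p|$, where $a_p(\ell)$ are the Fourier coefficients of $\tau\mapsto N_0(\ell,\tau)$ defined in (\ref{3.8}). This step has no counterpart in your proposal and cannot be recovered after you have already integrated out $\tau$.

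A second, related misidentification: you interpret $\vp_p(\ell)$ as the $p$-th term of a Poisson/stationary-phase expansion of $\overline{N}_0$ in the spectral variable $\l$. In the paper, $p$ is dual to the quasimomentum: $\vp_p(\ell)$ is (up to the factor $\ell^{1/4}$ and a bounded remainder, see (\ref{3.28})--(\ref{3.19})) the Fourier coefficient $a_p(\ell)=\int_{-\tauo}^{\tauo}N_0(\ell,\tau)\cos(2\pi p\tau)\di\tau$, and the sum over $k$ inside $\vp_p$ comes from expanding the sawtooth $\phi$ in the Euler--Maclaurin correction. Hypothesis (\ref{2.3}) therefore feeds directly into the spread via Lemma~\ref{lm4.1}, giving (\ref{4.16}); combined with the Lipschitz-type bound on $a_0$ from Lemma~\ref{lm3.2} and the absence of gaps for the Dirichlet Laplacian, it yields the growing overlap (\ref{5.12}), from which the four terms in the maximum in (\ref{2.11}) arise. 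Your accounting of those four terms is therefore also not aligned with where they actually come from (two of them come from absorbing the remainder $S^{(6)}(\ell)$ into $\tfrac{c_0}{2}\ell^{1/4-\g}$, the last from requiring the overlap to exceed $\om_\cL$).
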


Our next main result states that condition (\ref{2.3}) holds for sufficiently small $\xi$.

\begin{theorem}\label{th2.3}
Let
\begin{equation}
\label{2.8}
  \xi<\xi_0,
\end{equation}
where
\begin{align*}
&\xi_0:=\left(\frac{c_1}{2\z\left(\frac{3}{2}\right)}\right)^\frac{2}{3}\approx  0.10121,
\\
& c_1:= \frac{c_2}{\sqrt{c_2^2+1}},
\qquad
 c_2:=\frac{(78\sqrt{3}+54\sqrt{11})^\frac{1}{3}-\sqrt{3}}{9} - \frac{8}{3(78\sqrt{3}+54\sqrt{11})^\frac{1}{3}},
\end{align*}
where $\z(t)$ is the Riemann zeta function.
Then condition (\ref{2.3}) holds with
$$
\ell_0(\xi)=1,\qquad \g=0,\qquad c_0=\frac{c_1-2\z\left(\frac{3}{2}\right)\xi^\frac{3}{2}}{\pi\xi}
$$
and the statement of Theorem~\ref{th2.1} is true.
\end{theorem}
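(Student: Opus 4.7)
The plan is to reduce condition (\ref{2.3}) to a purely trigonometric extremum estimate for the $k=0$ term of $\vp_p$, and then to exhibit $p\in\{1,3\}$ that works uniformly in $\ell$.

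First I would isolate the $k=0$ summand in (\ref{2.4}) from the rest. For every $p\in\mathds{N}$ and $k\ne 0$ one has $(k^2/\xi^2 + p^2)^{3/4} \geqslant |k|^{3/2}/\xi^{3/2}$, so the tail is bounded uniformly in $p$ and $\ell$:
\begin{equation*}
\frac{1}{\pi\xi}\sum_{k\ne 0}\frac{1}{(k^2/\xi^2+p^2)^{3/4}}
\leqslant \frac{2\z(3/2)\,\xi^{1/2}}{\pi}.
\end{equation*}
Consequently
\begin{equation*}
|\vp_p(\ell)| \geqslant \frac{1}{\pi\xi}\left(\frac{|\sin(2\pi p \sqrt{\ell} - \pi/4)|}{p^{3/2}} - 2\z(3/2)\xi^{3/2}\right),
\end{equation*}
and in view of the prescribed $c_0 = (c_1 - 2\z(3/2)\xi^{3/2})/(\pi\xi)$, the hypothesis (\ref{2.3}) follows as soon as I establish the $\ell$-independent trigonometric bound
\begin{equation*}
\max_{p\in\mathds{N}} \frac{|\sin(p\theta - \pi/4)|}{p^{3/2}} \geqslant c_1,
\qquad \theta:=2\pi\sqrt{\ell}.
\end{equation*}

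To prove this bound I would use only $p=1$ and $p=3$. If $|\sin(\theta - \pi/4)| \geqslant c_1$, the choice $p=1$ already works. Otherwise $\theta = \pi/4 + m\pi + \d$ for some $m\in\mathds{Z}$ with $|\d| \leqslant \arcsin c_1 =: \e^*$; a direct angle manipulation gives $\sin(3\theta - \pi/4) = (-1)^{3m}\cos(3\d)$, and since $3\e^* < \pi/2$ we get $|\sin(3\theta-\pi/4)|\geqslant \cos(3\e^*)$. Thus the $p=3$ candidate is at least $\cos(3\e^*)/(3\sqrt 3)$, and the required bound reduces to $\cos(3\e^*)/(3\sqrt 3)\geqslant \sin\e^* = c_1$, equivalent at the extremal $\e^*$ to $\cos(3\e^*) = 3\sqrt{3}\sin\e^*$. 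Substituting $\cos(3\e) = \cos\e(1-4\sin^2\e)$ and setting $c_2:=\tan\e^*$, so that $c_1 = c_2/\sqrt{1+c_2^2}$, this equation rearranges to the depressed cubic
\begin{equation*}
3\sqrt{3}\,c_2^3 + 3c_2^2 + 3\sqrt{3}\,c_2 - 1 = 0,
\end{equation*}
whose unique real root, given by Cardano's formula, is precisely the $c_2$ of Theorem~\ref{th2.3}.

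Finally, the threshold $\xi_0$ is nothing but the value at which $c_0$ changes sign: $c_0 > 0$ is equivalent to $\xi^{3/2} < c_1/(2\z(3/2))$, i.e.\ to (\ref{2.8}). Since the trigonometric lower bound above is uniform in $\ell\geqslant 1$, condition (\ref{2.3}) holds with $\ell_0=1$, $\g=0$ and $c_0$ as stated, and Theorem~\ref{th2.1} applies. The main obstacle I anticipate is recognising the correct auxiliary index: $p=3$ rather than the naive $p=2$, because only for $p=3$ does $3\cdot(\pi/4)-\pi/4=\pi/2$ convert the bad near-zero values of $\sin(\theta-\pi/4)$ into near-one values of $\sin(3\theta-\pi/4)$, yielding a clean $c_1$ throughout the bad interval.
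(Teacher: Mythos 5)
Your proposal is correct and follows essentially the same route as the paper: isolate the $k=0$ term, bound the tail by $2\z\left(\frac{3}{2}\right)\xi^{\frac{3}{2}}$, and use only $p=1$ and $p=3$, where the phase shift turns $\sin\left(6\pi\ell^{\frac{1}{2}}-\frac{\pi}{4}\right)$ into $\cos 3z$ with $z=2\pi\ell^{\frac{1}{2}}-\frac{\pi}{4}$, so that the uniform constant is $\min_z\max\left\{|\sin z|,\,3^{-\frac{3}{2}}|\cos 3z|\right\}=c_1$ via the same cubic $3^{\frac{3}{2}}\tan^3 z+3\tan^2 z+3^{\frac{3}{2}}\tan z-1=0$. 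Your dichotomy (``if $p=1$ fails, then $p=3$ succeeds'') is just a repackaging of the paper's explicit computation of that minimum.
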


In the next theorem we prove the strong Bethe-Sommerfeld conjecture.

\begin{theorem}\label{th2.2}
Assume that condition (\ref{2.8}) holds and
\begin{align}
  &0\leqslant \frac{T^2}{\pi^2}\om_\cL<\frac{((A(\xi)-\xi)^2+1)^2}{4}-A^2(\xi), \quad A(\xi):=\frac{\sqrt{3+4\xi^2}+\xi}{3},\label{2.9}
  \\
  &
  \begin{aligned}
  0\leqslant & c_1-2\z\left(\frac{3}{2}\right)\xi^\frac{3}{2}
  -\frac{(3+2\sqrt{2})\pi+3}{6}\xi-\frac{3\pi}{4}\xi^2
- \frac{\xi}{32\pi} \sqrt{9+\frac{25}{1024\pi^2}}   -\frac{T\om_\cL}{4}.
  \end{aligned}\label{2.10}
\end{align}
 Then the  spectrum of the operator $\Op$ has no internal gaps.
\end{theorem}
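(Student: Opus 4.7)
The plan is to combine the high-energy absence-of-gaps already provided by Theorem~\ref{th2.1} (applied via Theorem~\ref{th2.3}) with a direct low-energy Floquet analysis, matching them at the threshold $\ell_1$. First I would invoke Theorem~\ref{th2.3}, which is available under hypothesis~(\ref{2.8}), to secure the hypotheses of Theorem~\ref{th2.1} with $\ell_0=1$, $\g=0$, and $c_0=(c_1-2\z(\tfrac{3}{2})\xi^{3/2})/(\pi\xi)$; Theorem~\ref{th2.1} then gives that $\spec(\Op)$ has no internal gaps in $[\ell_1,+\infty)$ for the explicit $\ell_1$ of~(\ref{2.11}). Substituting these values into~(\ref{2.11}) and using the identity $\pi c_0\xi=c_1-2\z(\tfrac{3}{2})\xi^{3/2}$, each of the four terms inside the maximum becomes an explicit function of $\xi$, $T$, $\om_\cL$; hypothesis~(\ref{2.10}) is precisely the additive inequality that caps all four simultaneously by a common explicit threshold $\ell_\ast(\xi,T\om_\cL)$, giving $\ell_1\leqslant \pi^2\ell_\ast/T^2+\om_-$.

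The heart of the argument is to verify that the spectrum already fills the interval $[\inf\spec(\Op),\ell_1]$. For this I would Floquet-decompose $\Op$ in $x_1$: for every quasi-momentum $k\in[-\pi/(2T),\pi/(2T))$ the fiber $\Op(k)$ on $\square$ (quasi-periodic in $x_1$, Dirichlet in $x_2$) has discrete spectrum $\{E_m(k)\}_{m\in\mathds{N}}$, and $\spec(\Op)=\bigcup_m B_m$ with $B_m:=\{E_m(k)\}$. The min--max principle together with $\om_-I\leqslant \cL\leqslant \om_+I$ yields
\begin{equation*}
E_m^{\mathrm{free}}(k)+\om_- \leqslant E_m(k)\leqslant E_m^{\mathrm{free}}(k)+\om_+,
\end{equation*}
where $E_m^{\mathrm{free}}(k)$ is the $m$-th reordering of the explicit free family $(k+\pi n/T)^2+(\pi r/d)^2$, $n\in\mathds{Z}$, $r\in\mathds{N}$. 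The band-overlap criterion $\sup B_m\geqslant \inf B_{m+1}$ then reduces to a purely arithmetic inequality on this lattice, perturbed by $\om_\cL$. The parameter $A(\xi)$ is naturally interpreted as the critical (scaled) quasi-momentum at which two adjacent free branches come closest; its defining quadratic $3A^2-2A\xi-1-\xi^2=0$ yields the identity $(A-\xi)^2+1=4A(A-\xi)$, which rewrites the right-hand side of~(\ref{2.9}) as $A^2\bigl(4(A-\xi)^2-1\bigr)$---exactly the free-dispersion budget available to absorb the perturbation $T^2\om_\cL/\pi^2$. Under~(\ref{2.9}) the overlap therefore survives for every consecutive pair $B_m,B_{m+1}$ with $\inf B_m\leqslant \ell_1$, and $[\inf\spec(\Op),\ell_1]\subseteq\spec(\Op)$. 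Combining with the first part, $\spec(\Op)=[\inf\spec(\Op),+\infty)$ has no internal gaps.

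The main obstacle I expect is the low-energy step: one has to identify, for each band index $m$ below the threshold, the critical pair of free branches and the quasi-momentum where their overlap is tightest, and then verify uniformly in $m$ that~(\ref{2.9}) indeed enforces the overlap. The algebraic identities arising from the defining quadratic for $A(\xi)$ are the computational shortcut that makes the lattice combinatorics collapse into the compact form of~(\ref{2.9}); without them, the very shape of~(\ref{2.9}) would look mysterious.
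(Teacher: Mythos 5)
Your high-level split (fill the spectrum up to some threshold by hand, use Theorem~\ref{th2.1} above it) is the right shape, and your algebra around $A(\xi)$ is sound: $A$ does satisfy $3A^2-2A\xi-1-\xi^2=0$, the identity $(A-\xi)^2+1=4A(A-\xi)$ is correct, and condition (\ref{2.9}) is indeed exactly the inequality $\bigl((A-\xi)^2+1\bigr)^2/4>A^2+\tfrac{T^2}{\pi^2}\om_\cL$ that makes a critical overlap strict. But in the paper this critical quantity is an \emph{energy}, $\ell_*=A^2(\xi)+\tfrac{T^2}{\pi^2}\om_\cL<\tfrac23$, the root of (\ref{5.30}) at which two explicit choices of test quasimomenta ($\tau_{\min}=1-\ell^{1/2}$ versus $\tau_{\max}=0$ or $\tfrac12$) are interchanged in the comparison $N_0(\ell-\tfrac{T^2}{\pi^2}\om_\cL,\tau_{\max})-N_0(\ell,\tau_{\min})>0$; condition (\ref{2.9}) therefore only controls the bottom of the spectrum, namely $\ell\leqslant 1$ (energies below $\pi^2/T^2$). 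Your claim that (\ref{2.9}) "enforces the overlap for every consecutive pair $B_m,B_{m+1}$ with $\inf B_m\leqslant\ell_1$" is the step that fails: for $\ell$ between $1$ and $\tfrac{T^2}{\pi^2}\ell_1$ the band overlap is \emph{not} a two-branch phenomenon governed by $A(\xi)$, it is the full lattice-point problem, and no argument is offered for it.

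The second misreading is the role of (\ref{2.10}). It is not "the additive inequality that caps the four terms in (\ref{2.11})" — indeed $\ell_1\geqslant\pi^2/T^2+\om_-$ always, because $\ell_0=1$ sits inside the maximum, so no hypothesis can push $\ell_1$ below $\pi^2/T^2$. What the paper actually does for $\ell\geqslant1$ is rerun the overlap argument of Section~4 with the sharper, non-asymptotic ingredients available under (\ref{2.8}): the lower bound $\sup_p|\vp_p|\geqslant(c_1-2\z(\tfrac32)\xi^{3/2})/(\pi\xi)$ from Theorem~\ref{th2.3} together with the explicit remainder $S^{(6)}(\ell)$ from (\ref{4.16a}) and Lemma~\ref{lm3.2}. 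This yields an overlap criterion $\tht_k^0-\eta_{k+1}^0\geqslant\om_\cL$ for \emph{every} $k$ with $\eta_k^0\geqslant\pi^2/T^2$, in the form of an inequality in $\ell=\tfrac{T^2}{\pi^2}\eta_k^0$ that is monotone in $\ell$ and hence only needs to be checked at $\ell=1$ — and that evaluation is precisely (\ref{2.10}). Without this second ingredient your proposal leaves the interval $[\pi^2/T^2,\ell_1]$ uncovered, and your "main obstacle" paragraph is, in effect, an acknowledgement that the heart of the proof is missing rather than a plan for supplying it. (Minor additional points: the paper first normalizes $\om_-=0$ by replacing $\cL$ with $\cL-\om_-$, and derives from (\ref{2.9}) the bound $\tfrac{T^2}{\pi^2}\om_\cL<\tfrac14+\xi^2$ guaranteeing the first band already reaches $\tfrac{\pi^2}{T^2}(\tfrac14+\xi^2)$; both steps are easy but needed.)
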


Let us discuss briefly the main results. The first theorem states the classical Bethe-Sommerfeld conjecture. Namely, provided estimate (\ref{2.11}) holds, the considered operator has finitely many gaps in its spectrum and surely there are no gaps above the point $\ell_1$.
The natural question is whether estimate (\ref{2.11}) is true or not. Theorem~\ref{th2.3} says that provided $\xi$ is not too big, namely, if $\xi$ obeys (\ref{2.8}),   estimate (\ref{2.11}) is true and the classical Bethe-Sommerfeld conjecture holds. We stress that condition~(\ref{2.11}) is better than similar condition~(\ref{1.1}) in~\cite{Be} since $\tfrac{1}{16}=0.0625$ is less than $\xi_0$. We failed to prove estimate (\ref{2.11}) for other values of $\xi$ but numerical tests show that this estimate is  very likely true for all values of $\xi$. In  Section~6 we discuss the functions $\vp_p(\ell)$ and condition (\ref{2.3}) in more details.

Theorem~\ref{th2.2} is devoted to the case of a small period. Here we prove the absence of the internal gaps    in the spectrum provided conditions~(\ref{2.8}),~(\ref{2.9}),~(\ref{2.10}) are satisfied. And as we see easily, these conditions hold for a sufficiently small period $T$ assuming that the width $d$ and the oscillation  $\om_\cL$ are fixed. For instance, this implies that given a fixed bounded potential $V(x_1,x_2)$, which is $2\pi$-periodic in  $x_1$, the  Schr\"odinger operator $-\D+V(\tfrac{x_1}{\e},x_2)$ in the strip $\Pi$ subject to the Dirichlet boundary condition has no internal gaps provided $\e$ is small enough. In view of this result and the aforementioned results in \cite{DT}, \cite{S85}, \cite{TMF18}, \cite{UMJ18}, \cite{IMM18}, \cite{DFDE17}, we could formulate a strong Bethe-Sommerfeld conjecture: \emph{multi-dimensional periodic differential operators, for which the classical Bethe-Sommerfeld conjecture holds, have no internal gaps in their spectra if the period is small enough}.

In conclusion we stress that our technique and results can be also extended to the case of Neumann or Robin boundary condition (with a constant coefficient). Of course, for other boundary conditions all the constants in Theorems~\ref{th2.1},~\ref{th2.3},~\ref{th2.2} are different.

\section{Counting functions}

This section is devoted to the preliminary notations and statements used then the proofs of Theorems~\ref{th2.1},~\ref{th2.3},~\ref{th2.2}.

Since the operator $\Op$ is periodic, its spectrum has a band structure and it can be described in terms of the band functions. In order to do this, we first define the operator
\begin{equation*}
\Op(\tau):=\left(\iu\frac{\p\ }{\p x_1}+\frac{\pi\tau}{T}\right)^2 -\frac{\p^2}{\p x_2^2} + e^{\iu\frac{\pi\tau}{T} x_1} \cL e^{-\iu \frac{\pi\tau}{T} x_1},\quad \tau\in\left(-\tauo,\tauo\right],
\end{equation*}
in $L_2(\square)$ subject to the Dirichlet condition on $\p\square\cap\p\Pi$ and to the periodic boundary conditions on the lateral boundaries of $\square$. The domain of this operator is the space $\Hoper^2(\square)$ consisting of the functions in $\H^2(\square)$ satisfying the Dirichlet condition on $\p\square\cap\p\Pi$ and the periodic conditions on the lateral boundaries of $\square$.

The operator $\Op(\tau)$ is self-adjoint and has a compact resolvent. The spectrum of the operator $\Op(\tau)$ consists of countably many discrete eigenvalues. These eigenvalues are taken in the ascending order counting the multiplicities and are denoted by $E_k(\tau)$, $k\in\mathds{N}$. By $E_k^0(\tau)$ we denote the same eigenvalues in the case $\cL=0$, that is, they are associated with the Dirichlet Laplacian in $\Pi$. The latter operator is denoted by $\Op_0$ and the associated operator on the periodicity cell $\square$ is $\Op_0(\tau)$.

The well-known formulae for the spectra of the operators $\Op$ and $\Op_0$ are
\begin{equation*}
\spec(\Op)=\bigcup\limits_{k\in\mathds{Z}} \left\{E_k(\tau): \tau\in\left(-\tauo,\tauo\right]\right\},\quad \spec(\Op_0)=\bigcup\limits_{k\in\mathds{Z}} \left\{E_k^0(\tau): \tau\in\left(-\tauo,\tauo\right]\right\}.
\end{equation*}

By $N_0(\ell,\tau)$ we denote the rescaled counting function of the operator $\Op(\tau)$ in the case $\cL=0$:
\begin{equation}\label{3.3}
N_0(\ell,\tau)=\#\left\{E_k^0(\tau):\, E_k^0(\tau)\leqslant \frac{\pi^2 \ell}{T^2}\right\}.
\end{equation}

Since the function $N_0(\ell,\tau)$ is associated with the Dirichlet Laplacian in $\Pi$, we can calculate explicitly the eigenvalues of $\Op^0(\tau)$:
\begin{equation}\label{3.6}
\big\{E_k^0(\tau),\ k\in\mathds{N}\big\}=\bigg\{\frac{\pi^2}{T^2}(\tau+n)^2+\frac{\pi^2 m^2}{d^2},\ n\in\mathds{Z},\ m\in\mathds{N}\bigg\}.
\end{equation}
The eigenvalues in the right hand side correspond to the eigenfunctions $e^{-\iu \frac{\pi n}{T}x_1}\sin\frac{\pi m}{d}x_2$ and they do not follow the ascending order. This is why we write (\ref{3.6}) as the identity for two sets of the eigenvalues.

The counting function $N_0(\ell,\tau)$ can be written as
\begin{equation}\label{3.7}
N_0(\ell,\tau)=\sum\limits_{\substack{n\in\mathds{Z},\, m\in\mathds{N} \\
(n+\tau)^2+\xi^2 m^2\leqslant \ell}} 1
= \sum\limits_{\substack{n\in\mathds{Z}_+,\, m\in\mathds{N}
\\
(n+\tau)^2+\xi^2 m^2\leqslant \ell}} 1 +
\sum\limits_{\substack{n\in\mathds{Z}_+,\, m\in\mathds{N}
\\
(n+1-\tau)^2+\xi^2 m^2\leqslant \ell}} 1.
\end{equation}

The definition of the function $N_0$ implies immediately that this function is even in $\tau\in\left[-\tauo,\tauo\right]$. By $a_p$ we denote the Fourier coefficients of this function:
\begin{equation}\label{3.8}
a_0(\ell):=\int\limits_{-\frac{1}{2}}^{\frac{1}{2}} N_0(\ell,\tau)\di\tau,
\qquad
a_p(\ell):=
\int\limits_{-\tauo}^{\tauo}  N_0(\ell,\tau)\cos(2\pi p\tau)\di\tau,\quad p\in\mathds{N};
\end{equation}
the Fourier series for $N_0(\ell,\tau)$ reads as
\begin{equation*}
N_0(\ell,\tau)=a_0(\ell)+2\sum\limits_{p=1}^{\infty} a_p(\ell)\cos2\pi p\tau.
\end{equation*}

The functions $E_k^0(\tau)$ satisfy the estimate $E_k^0(\tau)\geqslant \frac{\pi^2}{d^2}$ and therefore, the counting function $N_0(\ell,\tau)$ is non-zero only for
\begin{equation*}
 \ell\geqslant \xi^2.
\end{equation*}
In what follows we assume that this inequality is satisfied.

\subsection{Coefficient $a_0(\ell)$.} In this subsection we calculate and estimate the coefficient $a_0(\ell)$.

By $\lfloor\cdot\rfloor$ we denote the integer part of a number, while $\lceil\cdot\rceil$ stands for the fractional part.
By straightforward calculations we get:
\begin{align*}
a_0(\ell)=&\sum\limits_{\substack{n\in\mathds{Z}_+,\, m\in\mathds{N} \\ (n+\tauo)^2+\xi^2 m^2\leqslant \ell}} 1 +  \sum\limits_{\substack{n\in\mathds{Z}_+,\, m\in\mathds{N} \\ (n+1)^2+\xi^2 m^2\leqslant \ell}} 1 + 2\int\limits_{0}^{\tauo} \sum\limits_{\substack{
n\in\mathds{Z}_+,\, m\in\mathds{N}
\\
(n+\tau)^2+\xi^2 m^2\leqslant \ell
\\
n^2+\xi^2 m^2\leqslant \ell<\left(n+\tauo\right)^2+\xi^2 m^2
}} \di \tau
\\
&+ 2\int\limits_{0}^{\tauo} \sum\limits_{\substack{
n\in\mathds{Z}_+,\, m\in\mathds{N}
\\
(n+1-\tau)^2+\xi^2 m^2\leqslant \ell
\\
\left(n+\tauo\right)^2+\xi^2 m^2\leqslant \ell<(n+1)^2+\xi^2 m^2
}} \di \tau
\\
=&\sum\limits_{m=1}^{\left\lfloor\frac{\ell^{\frac{1}{2}}}{\xi}\right\rfloor} \left\lfloor\sqrt{\ell-\xi^2 m^2}+\tauo\right\rfloor + \sum\limits_{m=1}^{\left\lfloor\frac{\ell^{\frac{1}{2}}}{\xi}\right\rfloor} \left\lfloor\sqrt{\ell-\xi^2 m^2}\right\rfloor
\\
&+2
 \sum\limits_{\substack{
n\in\mathds{Z}_+,\, m\in\mathds{N}
\\
n^2+\xi^2 m^2\leqslant \ell<\left(n+\tauo\right)^2+\xi^2 m^2
}} \left(\sqrt{\ell-\xi^2 m^2}-n\right)
\\
&+  2 \sum\limits_{\substack{
n\in\mathds{Z}_+,\, m\in\mathds{N}
\\
\left(n+\tauo\right)^2+\xi^2 m^2\leqslant \ell<(n+1)^2+\xi^2 m^2
}} \left(\sqrt{\ell-\xi^2 m^2}-n-\tauo\right).
\end{align*}
Hence,
\begin{align*}
a_0(\ell)=&  \sum\limits_{\substack{m=1 \\ \left\lceil\sqrt{\ell-\xi^2 m^2}\right\rceil<\tauo
}}^{\left\lfloor\frac{\ell^{\frac{1}{2}}}{\xi}\right\rfloor} 2\left\lfloor\sqrt{\ell-\xi^2 m^2}\right\rfloor +
\sum\limits_{\substack{m=1 \\ \left\lceil\sqrt{\ell-\xi^2 m^2}\right\rceil<\tauo
}}^{\left\lfloor\frac{\ell^{\frac{1}{2}}}{\xi}\right\rfloor} 2\left\lceil\sqrt{\ell-\xi^2 m^2}\right\rceil
\\
&+ \sum\limits_{\substack{m=1 \\ \left\lceil\sqrt{\ell-\xi^2 m^2}\right\rceil\geqslant\tauo
}}^{\left\lfloor\frac{\ell^{\frac{1}{2}}}{\xi}\right\rfloor} \left(2\left\lfloor\sqrt{\ell-\xi^2 m^2}\right\rfloor+1\right)
+ \sum\limits_{\substack{m=1 \\ \left\lceil\sqrt{\ell-\xi^2 m^2}\right\rceil\geqslant \tauo
}}^{\left\lfloor\frac{\ell^{\frac{1}{2}}}{\xi}\right\rfloor} \left(2\left\lceil\sqrt{\ell-\xi^2 m^2}\right\rceil-1\right)
\end{align*}
and therefore,
\begin{equation}\label{3.12}
a_0(\ell)=2\sum\limits_{m=1}^{\big\lfloor\frac{\ell^{\frac{1}{2}}}{\xi}\big\rfloor}
\sqrt{\ell-\xi^2 m^2}.
\end{equation}

\begin{lemma}\label{lm3.2}
The function $a_0(\ell)$ is monotonically increasing and for each
$\xi^2\leqslant \ell\leqslant \tilde{\ell}$ the estimate holds:
\begin{equation*}
a_0(\tilde{\ell})-a_0(\ell)\leqslant \frac{\pi}{2\xi}(\tilde{\ell}-\ell)+\sqrt{\tilde{\ell}-\ell}.
\end{equation*}
\end{lemma}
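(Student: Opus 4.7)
The plan is to establish monotonicity from (3.12) directly and then estimate the increment via a pointwise bound on the almost-everywhere defined derivative $a_0'$.

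\textbf{Monotonicity.} Extending (3.12) to all $m\in\mathds{N}$ by setting $\sqrt{(\ell-\xi^2 m^2)_+}=0$ for $\xi^2 m^2>\ell$, each summand is a continuous non-decreasing function of $\ell$ that enters the (finite) sum from zero as $\ell$ crosses the threshold $\xi^2 m^2$. Hence $a_0$ is continuous and monotonically non-decreasing on $[\xi^2,+\infty)$.

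\textbf{Differential inequality and integration.} Since $a_0$ is absolutely continuous, termwise differentiation of (3.12) at points $s\ne\xi^2 m^2$ gives
$$a_0'(s)=\sum\limits_{m=1}^{M(s)}\frac{1}{\sqrt{s-\xi^2 m^2}},\qquad M(s):=\left\lfloor\sqrt{s}/\xi\right\rfloor,$$
and $a_0(\tilde\ell)-a_0(\ell)=\int_\ell^{\tilde\ell}a_0'(s)\di s$, the threshold singularities being locally integrable. The function $t\mapsto(s-\xi^2 t^2)^{-1/2}$ is positive, increasing and convex on $[0,\sqrt s/\xi)$, so separating the last term of the sum and comparing the rest to $\int_0^{\sqrt s/\xi}(s-\xi^2 t^2)^{-1/2}\di t=\pi/(2\xi)$ yields
$$a_0'(s)\leqslant\frac{\pi}{2\xi}+\frac{1}{\sqrt{s-\xi^2 M(s)^2}}.$$
The constant term integrates to $\pi(\tilde\ell-\ell)/(2\xi)$. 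For the boundary term I would split $[\ell,\tilde\ell]$ at the thresholds $s=\xi^2 m^2$ where $M(s)$ jumps; on each subinterval $M(s)\equiv m$ is constant and $2\sqrt{s-\xi^2 m^2}$ is an explicit primitive, so the sub-additivity $\sqrt b-\sqrt a\leqslant\sqrt{b-a}$ applied telescopically across consecutive subintervals produces the required bound $\sqrt{\tilde\ell-\ell}$.

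\textbf{Main obstacle.} The delicate step is obtaining a sharp constant in front of $\sqrt{\tilde\ell-\ell}$ when $[\ell,\tilde\ell]$ crosses several thresholds $\xi^2 m^2$, since a crude piece-by-piece bound on the boundary integral is of order $\xi\sqrt{2m+1}$ on each subinterval and summing these loses control. The key will be to exploit the vanishing of $\sqrt{s-\xi^2 m^2}$ at each threshold so that the individual contributions telescope; equivalently, one can view the increment as $a_0(\tilde\ell)-a_0(\ell)=2\sum_{m\geqslant 1}F(m)$ with $F(m):=\sqrt{(\tilde\ell-\xi^2 m^2)_+}-\sqrt{(\ell-\xi^2 m^2)_+}$ unimodal in $m$, $\int_0^\infty F(y)\di y=\pi(\tilde\ell-\ell)/(4\xi)$, and peak value $\sqrt{\tilde\ell-\ell}$, and then apply a sum-versus-integral inequality for unimodal functions.
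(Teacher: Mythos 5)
Your differential-inequality route breaks down exactly at the boundary term, and you have correctly sensed where. The pointwise bound $a_0'(s)\leqslant\frac{\pi}{2\xi}+(s-\xi^2M(s)^2)^{-1/2}$ is fine, but the integral $\int_\ell^{\tilde\ell}(s-\xi^2M(s)^2)^{-1/2}\di s$ does not telescope: on each full subinterval $[\xi^2m^2,\xi^2(m+1)^2]$ the primitive $2\sqrt{s-\xi^2m^2}$ restarts from zero at the left endpoint, so the integral equals $\sum_m 2\xi\sqrt{2m+1}$ up to end effects, which is of order $(\tilde\ell-\ell)/\xi$ when many thresholds are crossed --- the wrong order entirely, not a multiple of $\sqrt{\tilde\ell-\ell}$. (Note also that $\sum_k\sqrt{\delta_k}\geqslant\sqrt{\sum_k\delta_k}$, so applying $\sqrt{b}-\sqrt{a}\leqslant\sqrt{b-a}$ piece by piece and summing goes in the wrong direction.) The viable argument is the one in your final paragraph, and it is in substance the paper's own proof: the paper writes $a_0(\tilde\ell)-a_0(\ell)=2\sum_m F(m)$ with exactly your $F$, splits at $m=\lfloor\ell^{1/2}/\xi\rfloor$ into the increasing block $\frac{\tilde\ell-\ell}{\sqrt{\tilde\ell-\xi^2m^2}+\sqrt{\ell-\xi^2m^2}}$ and the decreasing block $\sqrt{\tilde\ell-\xi^2m^2}$, and compares each with an integral while keeping one boundary term; the two integrals together give $\frac{\pi}{2\xi}(\tilde\ell-\ell)$.

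Be aware, however, that this route yields $2\sqrt{\tilde\ell-\ell}$, not $\sqrt{\tilde\ell-\ell}$, for the second term. For a nonnegative unimodal $F$ the sharp general comparison is $\sum_{m\geqslant1}F(m)\leqslant\int_0^\infty F+\sup F$, which here gives $a_0(\tilde\ell)-a_0(\ell)\leqslant\frac{\pi}{2\xi}(\tilde\ell-\ell)+2\sqrt{\tilde\ell-\ell}$; the refinement $\int F+\tfrac12\sup F$ that the stated lemma would require is false. Indeed the lemma as printed fails near the thresholds: take $\xi=1$, $\ell=4$, $\tilde\ell=4.01$; then by (\ref{3.12}) one has $a_0(\tilde\ell)-a_0(\ell)=2(\sqrt{3.01}-\sqrt{3})+2\sqrt{0.01}\approx0.206$, whereas $\frac{\pi}{2}\cdot 0.01+\sqrt{0.01}\approx0.116$. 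Whenever $\ell=\xi^2m^2$, the newly entering summand alone contributes $2\sqrt{\tilde\ell-\ell}$. The paper's proof loses this same factor in its last displayed inequality, since its middle term $\frac{2(\tilde\ell-\ell)}{\sqrt{\tilde\ell-\xi^2M^2}+\sqrt{\ell-\xi^2M^2}}$ can by itself equal $2\sqrt{\tilde\ell-\ell}$. So you should prove and use the lemma with $2\sqrt{\tilde\ell-\ell}$ on the right-hand side; the subsequent estimates in Sections 4 and 5 only need this form, with the explicit constants adjusted accordingly.
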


\begin{proof}
By formula (\ref{3.12}) we have:
\begin{align*}
a_0(\tilde{\ell})-a_0(\ell)=& 2\sum\limits_{m=1}^{\Big\lfloor\frac{\tilde{\ell}^{\frac{1}{2}}}{\xi}\Big\rfloor}
\sqrt{\tilde{\ell}-\xi^2 m^2} - 2\sum\limits_{m=1}^{\Big\lfloor\frac{\ell^{\frac{1}{2}}}{\xi}\Big\rfloor}
\sqrt{\ell-\xi^2 m^2}
\\
=  & 2\sum\limits_{m=1}^{\Big\lfloor\frac{\ell^{\frac{1}{2}}}{\xi}\Big\rfloor}
\Big(\sqrt{\tilde{\ell}-\xi^2 m^2}-\sqrt{\ell-\xi^2 m^2}\Big) + 2\sum\limits_{m=\Big\lfloor\frac{\ell^{\frac{1}{2}}}{\xi}\Big\rfloor +1}^{\Big\lfloor\frac{\tilde{\ell}^{\frac{1}{2}}}{\xi}\Big\rfloor}
\sqrt{\tilde{\ell}-\xi^2 m^2}
\\
\\
=  & 2\sum\limits_{m=1}^{\Big\lfloor\frac{\ell^{\frac{1}{2}}}{\xi}\Big\rfloor}
\frac{\tilde{\ell}-\ell}{\sqrt{\tilde{\ell}-\xi^2 m^2}+\sqrt{\ell-\xi^2 m^2}}  + 2\sum\limits_{m=\Big\lfloor\frac{\ell^{\frac{1}{2}}}{\xi}\Big\rfloor +1}^{\Big\lfloor\frac{\tilde{\ell}^{\frac{1}{2}}}{\xi}\Big\rfloor}
\sqrt{\tilde{\ell}-\xi^2 m^2}.
\end{align*}
Since the function
\begin{equation*}
t\mapsto  \frac{1}{\sqrt{\tilde{\ell}-\xi^2 t^2}+
\sqrt{\ell-\xi^2 t^2}}
\end{equation*}
is monotonically increasing as $t\in\big[0,\tfrac{\ell^\frac{1}{2}}{\xi}\big]$, and the function $t\mapsto \sqrt{\tilde{\ell}-\xi^2 t^2}$ is monotonically
decreasing as $t\in\big[\tfrac{\ell^\frac{1}{2}}{\xi},\tfrac{\tilde{\ell}^\frac{1}{2}}{\xi}\big]$, we can continue estimating as follows:
\begin{align*}
a_0(\tilde{\ell})-a_0(\ell)\leqslant& 2(\tilde{\ell}-\ell) \int\limits_{0}^{\Big\lfloor\frac{\ell^{\frac{1}{2}}}{\xi}\Big\rfloor} \frac{\di t}{\sqrt{\tilde{\ell}-\xi^2 t^2}+
\sqrt{\ell-\xi^2 t^2}}
 +\frac{2(\tilde{\ell}-\ell)}{\sqrt{\tilde{\ell}-\xi^2 \lfloor\frac{\ell^\frac{1}{2}}{\xi}\rfloor^2}+
\sqrt{\ell-\xi^2 \lfloor\frac{\ell^\frac{1}{2}}{\xi}\rfloor^2}}
\\
&+ 2\int\limits_{\Big\lfloor\frac{\ell^{\frac{1}{2}}}{\xi}\Big\rfloor}^{\Big\lfloor\frac{\tilde{\ell}^{\frac{1}{2}}}{\xi}\Big\rfloor}
\sqrt{\tilde{\ell}-\xi^2t^2}\di t
\leqslant\frac{\pi}{2\xi}(\tilde{\ell}-\ell)+\sqrt{\tilde{\ell}-\ell}.
\end{align*}
The proof is complete.
\end{proof}

\subsection{Coefficient $a_p$.}
In this subsection we calculate the coefficients $a_p$ and estimate them.

As in the previous subsection, by (\ref{3.7}) and the parity of $N_0$ we have
\begin{align*}
a_p(\ell)
=& 2  \int\limits_{0}^{\tauo} \sum\limits_{\substack{
n\in\mathds{Z}_+,\, m\in\mathds{N}
\\
(n+\tau)^2+\xi^2 m^2\leqslant \ell
}}\cos 2\pi p \tau \di\tau
+ 2  \int\limits_{0}^{\tauo} \sum\limits_{\substack{
n\in\mathds{Z}_+,\, m\in\mathds{N}
\\
(n+1-\tau)^2 +\xi^2 m^2\leqslant \ell}
} \cos 2\pi p \tau \di\tau
\\
=& 2 \int\limits_{0}^{\tauo} \sum\limits_{\substack{
n\in\mathds{Z}_+,\, m\in\mathds{N}
\\
n^2+\xi^2 m^2\leqslant \ell<\left(n+\tauo\right)^2+\xi^2 m^2
\\
(n+\tau)^2+\xi^2 m^2\leqslant \ell
}}\cos 2\pi p \tau \di\tau
\\
&+ 2  \int\limits_{0}^{\tauo} \sum\limits_{\substack{
n\in\mathds{Z}_+,\, m\in\mathds{N}
\\
\left(n+\tauo\right)^2 +\xi^2 m^2\leqslant \ell<(n+1)^2+\xi^2 m^2
\\
(n+1-\tau)^2+\xi^2 m^2\leqslant \ell}
} \cos 2\pi p \tau \di\tau
\\
=&2 \sum\limits_{\substack{m=1,\ldots,\left\lfloor\frac{\ell^{\frac{1}{2}}}{\xi}\right\rfloor
\\
 n=\left\lfloor\sqrt{\ell-\xi^2 m^2}\right\rfloor
\\
0\leqslant  \left\lceil\sqrt{\ell-\xi^2 m^2}\right\rceil<\tauo}} \int\limits_{0}^{ \left\lceil\sqrt{\ell-\xi^2 m^2}\right\rceil } \cos 2\pi p \tau\di\tau
\\
&+ 2  \sum\limits_{\substack{m=1,\ldots,\left\lfloor\frac{\ell^{\frac{1}{2}}}{\xi}\right\rfloor
\\
 n=\left\lfloor\sqrt{\ell-\xi^2 m^2}\right\rfloor
\\
\tauo\leqslant  \left\lceil\sqrt{\ell-\xi^2 m^2}\right\rceil<1}} \int\limits_{1-\left\lceil\sqrt{\ell-\xi^2 m^2}\right\rceil}^{\tauo } \cos 2\pi p \tau\di\tau,
\end{align*}
and thus,
\begin{equation*}
a_p(\ell)=\frac{1}{\pi p} \sum\limits_{m=1}^{\left\lfloor\frac{\ell^{\frac{1}{2}}}{\xi}\right\rfloor}
\sin 2\pi p \sqrt{\ell-\xi^2 m^2}.
\end{equation*}

Our next step is to transform the above identity to an integral form. We again employ the Euler-Maclaurin formula for the function $t\mapsto \sin 2\pi p \sqrt{\ell-\xi^2t^2}$, see \cite[Ch. 1, Sect. 1.1, Thm. 1.3]{Kr}:
\begin{align*}
a_p(\ell):=&\frac{1}{\pi p} \int\limits_{0}^{\frac{\ell^{\frac{1}{2}}}{\xi}} \sin 2\pi p\sqrt{\ell-\xi^2 t^2}\di t - \frac{\sin (2\pi p \ell^{\frac{1}{2}})}{\sqrt{2}\pi p}
 \\
 &- 2 \int\limits_{0}^{\frac{\ell^{\frac{1}{2}}}{\xi}} \frac{\xi^2 t\,\phi(t)}{\sqrt{\ell-\xi^2 t^2}}\cos 2\pi p \sqrt{\ell-\xi^2 t^2}\di t,\qquad \phi(t):=\lceil t\rceil-\tfrac{1}{2}.
\end{align*}
In both integrals we make the change $t\mapsto \frac{\ell^{\frac{1}{2}}}{\xi}\sin t$:
\begin{align}\label{4.2}
&a_p(\ell):=S^{(1)}_p(\ell) + S^{(2)}_p(\ell) -  \frac{\sin(2\pi p \ell^{\frac{1}{2}})}{2\pi p},
\\
& S^{(1)}_p(\ell):=\frac{ \ell^{\frac{1}{2}}} {\pi p
\xi} \int\limits_{0}^{\frac{\pi}{2}} \sin (2\pi p \ell^{\frac{1}{2}}\cos t)\cos t \di t,\nonumber
\\
& S^{(2)}_p(\ell):=-2 \ell^{\frac{1}{2}} \int\limits_{0}^{\frac{\pi}{2}}   \phi\left(\frac{\ell^{\frac{1}{2}}}{\xi}\sin t\right) \cos(2\pi p \ell^{\frac{1}{2}}\cos t)\sin t \di t.\nonumber
\end{align}
Here the first integral is the well-known representation for the Bessel function:
\begin{equation}\label{4.3}
S^{(1)}_p(\ell)=\frac{\ell^{\frac{1}{2}}}{2p\xi} J_1(2\pi p\ell^{\frac{1}{2}}).
\end{equation}
The results of \cite[Ch. V\!I\!I, Sect. 7.3]{Wa} imply the estimate
\begin{equation*}
\left|J_1(t)+\sqrt{\frac{2}{\pi t}} \cos\left(t+\frac{\pi}{4}\right)\right| \leqslant \frac{\sqrt{2}}{8\sqrt{\pi}} \left(\frac{3}{t^{\frac{3}{2}}} \left|\cos\left(t-\frac{\pi}{4}\right)\right| +\frac{5}{16t^{\frac{5}{2}}} \left|\cos\left(t+\frac{\pi}{4}\right)\right|\right)
\end{equation*}
as $t>0$. By (\ref{4.3}) and the Cauchy-Schwarz inequality this leads us to the estimate
\begin{equation}\label{4.7}
\begin{aligned}
\left| S^{(1)}_p(\ell) + \frac{\ell^{\frac{1}{4}}}{2\pi p^{\frac{3}{2}} \xi} \cos\left(2\pi p \ell^{\frac{1}{2}} + \frac{\pi}{4}\right)
\right| \leqslant & \frac{1}{32\pi^2 \xi} \bigg(
\frac{3}{p^{\frac{5}{2}}\ell^{\frac{1}{4}}} \left|\cos \left(2\pi p\ell^{\frac{1}{2}}-\frac{\pi}{4}\right)\right|
 \\
 &+ \frac{5}{32\pi p^{\frac{7}{2}}\ell^{\frac{3}{4}}} \left|\cos\left(2\pi p \ell^{\frac{1}{2}}+\frac{\pi}{4}\right)\right|
\bigg)
\\
\leqslant & \frac{1}{32\pi^2 p^\frac{5}{2} \xi \ell^\frac{1}{4}} \sqrt{9  +  \frac{25}{1024\pi^2 p^2 \ell}}
\end{aligned}
\end{equation}
for all $\ell>0$.

\begin{lemma}\label{lm3.0}
The identity holds:
\begin{equation*}
S^{(2)}_p(\ell)=\frac{2 \ell^{\frac{1}{2}}}{\pi} \sum\limits_{k=1}^{\infty} \frac{1}{k}\int\limits_{0}^{\frac{\pi}{2}} \sin \left(\frac{2\pi k \ell^{\frac{1}{2}}}{\xi}\sin t\right) \cos(2\pi p \ell^{\frac{1}{2}}\cos t) \sin t \di t.
\end{equation*}
\end{lemma}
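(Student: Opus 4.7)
The plan is to substitute the Fourier series expansion of the sawtooth function $\phi$ into the defining integral of $S^{(2)}_p(\ell)$ from (\ref{4.2}) and then justify the interchange of the series with the integration. Recall that for $\tau\notin\mathds{Z}$ the sawtooth admits the classical Fourier expansion
$$
\phi(\tau)=\lceil\tau\rceil-\tfrac{1}{2}=-\frac{1}{\pi}\sum\limits_{k=1}^{\infty}\frac{\sin(2\pi k\tau)}{k},
$$
and the associated partial sums $\Phi_N(\tau):=-\frac{1}{\pi}\sum_{k=1}^{N}\frac{\sin(2\pi k\tau)}{k}$ are uniformly bounded in $\tau\in\mathds{R}$ and in $N\in\mathds{N}$ by an absolute constant $C$; this is a well-known fact, which can be proved by Abel summation together with the Dirichlet-kernel bound $\bigl|\sum_{k=1}^{N}\sin(2\pi k\tau)\bigr|\leqslant\tfrac{1}{|\sin\pi\tau|}$ combined with a direct estimate $|\sin(2\pi k\tau)|\leqslant 2\pi k|\tau|$ for $\tau$ close to an integer.

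Since $t\mapsto\sin t$ is strictly monotone on $[0,\tfrac{\pi}{2}]$, the exceptional set $\mathcal{E}:=\bigl\{t\in[0,\tfrac{\pi}{2}]:\frac{\ell^{1/2}}{\xi}\sin t\in\mathds{Z}\bigr\}$ is finite and hence of Lebesgue measure zero, and on its complement we have the pointwise convergence $\Phi_N\bigl(\tfrac{\ell^{1/2}}{\xi}\sin t\bigr)\to\phi\bigl(\tfrac{\ell^{1/2}}{\xi}\sin t\bigr)$. Replacing $\phi$ by $\Phi_N$ inside the defining integral of $S^{(2)}_p(\ell)$ and observing that the integrand is dominated by the $L^1$-function $C|\sin t|$ independently of $N$, the dominated convergence theorem yields
$$
S^{(2)}_p(\ell)=\lim\limits_{N\to\infty}\frac{2\ell^{\frac{1}{2}}}{\pi}\sum\limits_{k=1}^{N}\frac{1}{k}\int\limits_{0}^{\frac{\pi}{2}}\sin\!\Bigl(\frac{2\pi k\ell^{\frac{1}{2}}}{\xi}\sin t\Bigr)\cos(2\pi p\ell^{\frac{1}{2}}\cos t)\sin t\di t,
$$
which is exactly the claimed representation, with the infinite series on the right-hand side understood as the limit of its partial sums (the series need not converge absolutely).

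The delicate point is precisely that Fubini's theorem cannot be invoked directly, because the Fourier expansion of $\phi$ fails to converge absolutely; this is what forces the slightly more careful route through uniform boundedness of $\Phi_N$ and the dominated convergence theorem. Once that is in place, the reshuffling of summation and integration is routine and, since each partial sum contains only finitely many terms, the swap is automatic for every fixed $N$.
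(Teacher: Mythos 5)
Your proof is correct, and it reaches the identity by a genuinely different route from the paper in the one place where the argument is delicate, namely the interchange of the Fourier series of $\phi$ with the integral. You rely on the pointwise convergence of the sawtooth series off the integers (a Dirichlet--Jordan type fact), observe that the exceptional set $\{t:\ \tfrac{\ell^{1/2}}{\xi}\sin t\in\mathds{Z}\}$ is finite, and then close the argument with the uniform boundedness of the partial sums and dominated convergence. The paper instead uses only the mean-square convergence of the Fourier series on finite intervals (its estimate (\ref{3.30})) together with the same uniform bound (\ref{3.31}); but because it works with the $L^2$ norm it must change variables $z=\tfrac{\ell^{1/2}}{\xi}\sin t$, which produces the singular Jacobian $(\ell-\xi^2z^2)^{-1/2}$, and it then needs a Cauchy--Schwarz step followed by a H\"older inequality with exponents $3$ and $\tfrac{3}{2}$ to absorb that singularity before letting $N\to\infty$. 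What each approach buys: the paper's argument requires nothing beyond Bessel/Parseval-type $L^2$ convergence, which is the more elementary input, at the cost of the H\"older gymnastics; your argument invokes the slightly deeper pointwise convergence theorem for the sawtooth, but in exchange the limit passage is immediate, no change of variables is needed, and no singular weight ever appears. One small point worth making explicit if you write this up: the series in the final formula is then understood as the limit of its partial sums, exactly as you note, since term-by-term absolute convergence is not available; this matches how the paper uses the identity downstream (it always pairs the $k$-th term with the $1/k$ factor and the decay of $S_p^{(3,k)}$, $S_p^{(4,k)}$ in $\eta_{p,k}$).
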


\begin{proof}
We represent the function $\phi$ by its Fourier series
\begin{equation*}
\phi(z)=-\frac{1}{\pi} \sum\limits_{k=1}^{\infty} \frac{\sin 2\pi k z}{k}
\end{equation*}
and we know that
\begin{equation}\label{3.30}
\lim\limits_{N\to+\infty} \int\limits_{0}^{R} \left|\phi(z)+ \sum\limits_{k=1}^{N} \frac{\sin 2\pi k z}{\pi k}\right|^2\di z=0
\end{equation}
for each fixed $R>0$. It was also stated in \cite[Ch. 1, Sect. 1.1]{Kr} that the estimate
\begin{equation}\label{3.31}
\left|\frac{1}{\pi} \sum\limits_{k=1}^{N} \frac{\sin 2\pi k z}{k}\right|\leqslant C,\qquad z\in[0,R],
\end{equation}
is true,  where $C$ is some constant independent of $N$ and $z$.

By the H\"older inequality we obtain
\begin{align*}
\frac{1}{2\ell^{\frac{1}{2}}}&\left|S_p^{(2)}(\ell)-\sum\limits_{k=1}^{N}\frac{1}{\pi k}
 \int\limits_{0}^{\frac{\pi}{2}} \sin \left(\frac{2\pi k \ell^{\frac{1}{2}}}{\xi}\sin t\right) \cos(2\pi p \ell^{\frac{1}{2}}\cos t) \sin t \di t
\right|^2
\\
&=\left| \int\limits_{0}^{\frac{\pi}{2}} \left(\phi\left(\frac{\ell^\frac{1}{2}}{\xi}\sin t\right)+
\sum\limits_{k=1}^{N} \frac{\sin \left(\frac{2\pi k \ell^{\frac{1}{2}}}{\xi}\sin t\right)}{\pi k}
\right) \cos(2\pi p \ell^{\frac{1}{2}}\cos t) \sin t \di t
\right|^2
\\
&\leqslant C  \int\limits_{0}^{\frac{\pi}{2}} \left|\phi\left(\frac{\ell^\frac{1}{2}}{\xi}\sin t\right)+
\sum\limits_{k=1}^{N} \frac{\sin \left(\frac{2\pi k \ell^{\frac{1}{2}}}{\xi}\sin t\right)}{\pi k}
\right|^2\di t
\\
&= C  \int\limits_{0}^{\frac{\ell^\frac{1}{2}}{\xi}} \left|\phi(z)+
 \sum\limits_{k=1}^{N} \frac{\sin(2\pi k z)}{\pi k }
\right|^2\frac{\di z}{\sqrt{\ell-\xi^2 z^2}}
\\
&\leqslant C \left(\int\limits_{0}^{\frac{\ell^\frac{1}{2}}{\xi}} \left|\phi(z)+
 \sum\limits_{k=1}^{N} \frac{\sin(2\pi k z)}{\pi k }
\right|^6\di z\right)^\frac{1}{3}
\left(\int\limits_{0}^{\frac{\ell^\frac{1}{2}}{\xi}}  \frac{\di z}{
(\ell-\xi^2 z^2)^\frac{3}{4}}
\right)^\frac{2}{3},
\end{align*}
where the symbol $C$ stands for some inessential constants independent of $N$. The function $\phi(z)$ is uniformly bounded and employing estimate (\ref{3.31}), we continue estimating as follows:
\begin{align*}
\frac{1}{2\ell^{\frac{1}{2}}}&\left|S_p^{(2)}(\ell)-\sum\limits_{k=1}^{N}\frac{1}{\pi k}
 \int\limits_{0}^{\frac{\pi}{2}} \sin \left(\frac{2\pi k \ell^{\frac{1}{2}}}{\xi}\sin t\right) \cos(2\pi p \ell^{\frac{1}{2}}\cos t) \sin t \di t
\right|^2
\\
&\leqslant C \left(\int\limits_{0}^{\frac{\ell^\frac{1}{2}}{\xi}} \left|\phi(z)+
 \sum\limits_{k=1}^{N} \frac{\sin(2\pi k z)}{\pi k }
\right|^2\di z\right)^\frac{1}{3}.
\end{align*}
By (\ref{3.30}), the right hand side of the obtained inequality tends to zero as $N\to+\infty$ and this proves the lemma.
\end{proof}


By the formula
\begin{align*}
\sin \left(\frac{2\pi k \ell^{\frac{1}{2}}}{\xi}\sin t\right) \cos(2\pi p \ell^{\frac{1}{2}}\cos t) = &\frac{1}{2} \sin\left( 2\pi   \ell^{\frac{1}{2}}\left(\frac{k}{\xi}\sin t+p \cos t\right)\right)
\\
&+\frac{1}{2}\sin\left( 2\pi \ell^{\frac{1}{2}} \left(\frac{k}{\xi}\sin t-p\cos t\right)\right)
\end{align*}
we get:
\begin{gather}\label{4.4}
S^{(2)}_p(\ell)=\frac{\ell^\frac{1}{2}}{\pi} \sum\limits_{k=1}^{\infty} \frac{1}{k} \big(S_{p,+}^{(2,k)}(\ell) + S_{p,-}^{(2,k)}(\ell)\big),
\\
S_{p,\pm}^{(2,k)}(\ell):= \int\limits_{0}^{\frac{\pi}{2}} \sin\left( 2\pi \ell^{\frac{1}{2}} \left(\frac{k}{\xi}\sin t\pm\ p\cos t\right)\right) \sin t \di t.\nonumber
\end{gather}

We denote
\begin{equation*}
  \a_{p,k}=\a_{p,k}(T):=\arctan\frac{p \xi}{k},\quad \eta_{p,k}=\eta_{p,k}(\ell,T):= 2\pi  \ell^{\frac{1}{2}} \sqrt{\frac{k^2}{\xi^2}+p^2}.
\end{equation*}
Then the formulae for $S_{p,\pm}^{(2,k)}$ can be rewritten as
\begin{align*}
S_{p,\pm}^{(2,k)}(\ell)=\int\limits_{0}^{\frac{\pi}{2}} \sin(\eta_{p,k}\sin(t\pm \a_{p,k}))\sin t \di t= \int\limits_{\pm \a_{p,k}}^{\frac{\pi}{2}\pm \a_{p,k}} \sin(\eta_{p,k}\sin t)\sin (t\mp \a_{p,k}) \di t.
\end{align*}
Hence, thanks to the parity properties of $\sin t$ and $\cos t$,
\begin{align*}
S_{p,+}^{(2,k)}+ S_{p,-}^{(2,k)}=& \cos\a_{p,k} \left( \int\limits_{-\a_{p,k}}^{\frac{\pi}{2}-\a_{p,k}} \sin(\eta_{p,k}\sin t)\sin t  \di t + \int\limits_{\a_{p,k}}^{\frac{\pi}{2}+\a_{p,k}} \sin(\eta_{p,k}\sin t)\sin t  \di t \right)
\\
&+ \sin\a_{p,k} \left(
\int\limits_{-\a_{p,k}}^{\a_{p,k}} \sin(\eta_{p,k}\sin t)\cos t  \di t - \int\limits_{\frac{\pi}{2}-\a_{p,k}}^{\frac{\pi}{2}+\a_{p,k}} \sin(\eta_{p,k}\sin t)\cos t  \di t
\right)
\\
=&2\cos\a_{p,k} \int\limits_{0}^{\frac{\pi}{2}} \sin(\eta_{p,k}\sin t)\sin t  \di t.
\end{align*}
In the last integral we make the change of the variable $t\mapsto \sin\left(\frac{\pi}{4}-\frac{t}{2}\right)$ and we get:
\begin{equation}\label{3.43}
S_{p,+}^{(2,k)} + S_{p,-}^{(2,k)}=4\cos\a_{p,k}\int\limits_{0}^{\frac{1}{\sqrt{2}}}  \sin(\eta_{p,k}(1-2t^2))\frac{1-2t^2}{\sqrt{1-t^2}}\di t.
\end{equation}
We denote
\begin{equation*}
h(s):= \frac{2}{\sqrt{1-s}} - \frac{1}{1-s+\sqrt{1-s}}
\end{equation*}
and we see that
\begin{equation*}
\frac{1-2t^2}{\sqrt{1-t^2}}=1-t^2h(t^2).
\end{equation*}
We substitute this identity into the integral in (\ref{3.43}):
\begin{gather}\label{3.44}
S_{p,+}^{(2,k)} + S_{p,-}^{(2,k)}=\cos\a_{p,k} \big(
S_p^{(4,k)} - S_p^{(3,k)}\big),
\\
S_p^{(3,k)}:=4\int\limits_{0}^{ \frac{1}{\sqrt{2}}} \sin(\eta_{p,k}(1-2t^2)) h(t^2) t^2 \di t,\quad
S_p^{(4,k)}:=4\int\limits_{0}^{\frac{1}{\sqrt{2}}} \sin(\eta_{p,k}(1-2t^2)) \di t. \nonumber
\end{gather}

In the integral $S_p^{(3,k)}$ we integrate by parts as follows:
\begin{equation}\label{4.11}
\begin{aligned}
S_p^{(3,k)}=&\frac{1}{\eta_{p,k}} \int\limits_{0}^{\frac{1}{\sqrt{2}}} h(t^2)t\di \cos (\eta_{p,k}(1-2t^2))
\\
=& \frac{\sqrt{2}}{\eta_{p,k}}
 -\frac{1}{\eta_{p,k}} \int\limits_{0}^{\frac{1}{\sqrt{2}}} \big(h(t^2)+2t^2 h'(t^2)\big) \cos(\eta_{p,k}(1-2t^2))\di t.
\end{aligned}
\end{equation}
The function
\begin{equation*}
h(s)+2s h'(s)=\frac{1}{(1+\sqrt{1-s})^2} + \frac{2}{(1-s)(1+\sqrt{1-s})} + \frac{1}{(1-s)^\frac{3}{2}(1+\sqrt{1-s})^2}
\end{equation*}
grows monotonically as $s\in[0,\tfrac{1}{2}]$ and hence,
\begin{equation*}
0<\frac{3}{2}=h(0)\leqslant h(t^2)+2t^2 h'(t^2).
\end{equation*}
Employing this estimate, we get
\begin{equation*}
\left|\int\limits_{0}^{\frac{1}{\sqrt{2}}} \big(h(t^2)+2t^2 h'(t^2)\big) \cos\eta_{p,k}(1-2t^2)\di t\right| \leqslant \int\limits_{0}^{\frac{1}{\sqrt{2}}} \big(h(t^2)+2t^2 h'(t^2)\big)\di t
 =\sqrt{2},
\end{equation*}
and by (\ref{4.11}) we obtain
\begin{equation*}
|S_p^{(3,k)}|\leqslant \frac{2\sqrt{2}}{\eta_{p,k}}.
\end{equation*}
In view of the definition of $\a_{p,k}$ we have
\begin{equation}\label{3.24}
\frac{\cos\a_{p,k}}{k}=\frac{1}{(k^2+p^2\xi^2)^\frac{1}{2}}.
\end{equation}
Hence,
\begin{equation}\label{3.45}
\left|\sum\limits_{k=1}^{+\infty} \frac{\cos\a_{p,k}}{k} S_p^{(3,k)}\right|\leqslant \frac{\sqrt{2}\xi}{\pi \ell^{\frac{1}{2}}}  \sum\limits_{k=1}^{\infty} \frac{1}{k^2+p^2\xi^2}= \frac{1}{\sqrt{2}\ell^{\frac{1}{2}}} \left(\frac{\coth(\pi p\xi)}{p\xi}-\frac{1}{\pi p^2\xi^2}\right).
\end{equation}

We proceed to estimating $S_p^{(4,k)}$.

\begin{lemma}\label{lm3.5}
The estimate
\begin{align*}
\left|\sum\limits_{k=1}^{\infty}\frac{\cos\a_{p,k}}{k}S_p^{(4,k)} -
\frac{\xi^\frac{1}{2}}{\ell^\frac{1}{4}} \sum\limits_{k=1}^{\infty} \frac{\sin\left(\eta_{p,k}-\frac{\pi}{4}\right)}{(k^2+p^2\xi^2)^\frac{3}{4}}
\right|
\leqslant \frac{1}{\sqrt{2}\ell^{\frac{1}{2}}} \left(\frac{\coth(\pi p\xi)}{p\xi}-\frac{1}{\pi p^2\xi^2}\right)
\end{align*}
holds true.
\end{lemma}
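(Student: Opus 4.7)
The plan mirrors the treatment of $S_p^{(3,k)}$ that culminated in estimate (\ref{3.45}): extract from $S_p^{(4,k)}$ the explicit Fresnel-type main term that matches the series in the lemma, control the remainder by a quantity of order $\eta_{p,k}^{-1}$, and then sum against $\cos\a_{p,k}/k=(k^2+p^2\xi^2)^{-1/2}$ using the closed form $\sum_{k\geqslant 1}(k^2+p^2\xi^2)^{-1}=\pi\coth(\pi p\xi)/(2p\xi)-1/(2p^2\xi^2)$ already invoked in (\ref{3.45}).

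First I would split the truncated integral as $\int_0^{1/\sqrt{2}}=\int_0^\infty-\int_{1/\sqrt{2}}^\infty$ and, in the full-line part, expand $\sin(\eta_{p,k}(1-2t^2))=\sin\eta_{p,k}\cos(2\eta_{p,k}t^2)-\cos\eta_{p,k}\sin(2\eta_{p,k}t^2)$. The Fresnel identities $\int_0^\infty\cos(2\eta t^2)\,dt=\int_0^\infty\sin(2\eta t^2)\,dt=\tfrac12\sqrt{\pi/(2\eta)}$ together with $\sin\eta-\cos\eta=\sqrt{2}\sin(\eta-\pi/4)$ then yield the leading contribution $\sqrt{2\pi/\eta_{p,k}}\sin(\eta_{p,k}-\pi/4)$; multiplying by $\cos\a_{p,k}/k=(k^2+p^2\xi^2)^{-1/2}$ from (\ref{3.24}) and substituting $\eta_{p,k}=2\pi\ell^{1/2}(k^2+p^2\xi^2)^{1/2}/\xi$ produces exactly the summand $\xi^{1/2}\ell^{-1/4}(k^2+p^2\xi^2)^{-3/4}\sin(\eta_{p,k}-\pi/4)$ that is subtracted in the lemma.

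Second, the remainder is the tail integral $-4\int_{1/\sqrt{2}}^\infty\sin(\eta_{p,k}(1-2t^2))\,dt$. I would handle it by a single integration by parts based on the identity $\partial_t\cos(\eta(1-2t^2))=4\eta t\sin(\eta(1-2t^2))$, which rewrites the tail as
\begin{equation*}
\left.\frac{\cos(\eta_{p,k}(1-2t^2))}{4\eta_{p,k}t}\right|_{1/\sqrt{2}}^{\infty}+\int\limits_{1/\sqrt{2}}^\infty\frac{\cos(\eta_{p,k}(1-2t^2))}{4\eta_{p,k}t^2}\,dt.
\end{equation*}
Since the phase vanishes at $t=1/\sqrt{2}$, the boundary term equals $-1/(2\sqrt{2}\,\eta_{p,k})$, while the residual integral has modulus at most $\int_{1/\sqrt{2}}^\infty(4\eta_{p,k}t^2)^{-1}\,dt=1/(2\sqrt{2}\,\eta_{p,k})$. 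This yields the pointwise bound $|S_p^{(4,k)}-\sqrt{2\pi/\eta_{p,k}}\sin(\eta_{p,k}-\pi/4)|\leqslant 2\sqrt{2}/\eta_{p,k}$. Multiplying by $\cos\a_{p,k}/k$, extracting the factor $\xi/(k^2+p^2\xi^2)$ from $1/\eta_{p,k}$, and summing in $k$ via the coth-series identity gives the right-hand side of the lemma.

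The main delicacy I anticipate is the constant bookkeeping. Performing the integration by parts directly on the combined tail of $\sin(\eta(1-2t^2))$, rather than on the two separated Fresnel tails $\int\cos(2\eta t^2)$ and $\int\sin(2\eta t^2)$, avoids having to recombine two oscillating boundary contributions through $\sin^2\eta_{p,k}+\cos^2\eta_{p,k}=1$ at the end; this is what allows the sharp coefficient in front of the final $\coth$-expression to emerge cleanly and to match the factor $1/\sqrt{2}$ in the stated estimate.
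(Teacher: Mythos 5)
Your argument is correct and is essentially the paper's own proof: the paper merely performs the substitution $t\mapsto\sqrt{2}\,\eta_{p,k}^{1/2}t$ first and then carries out exactly your split into a full-line Fresnel integral plus a tail controlled by a single integration by parts at the endpoint where the phase vanishes, arriving at the same pointwise bound $2\sqrt{2}/\eta_{p,k}$ and the same $\coth$-summation. The only slip is the quoted Fresnel value, which should read $\int_0^\infty\cos(2\eta t^2)\,dt=\tfrac{1}{2}\sqrt{\pi/(4\eta)}$ rather than $\tfrac{1}{2}\sqrt{\pi/(2\eta)}$; the leading term $\sqrt{2\pi/\eta_{p,k}}\,\sin\left(\eta_{p,k}-\tfrac{\pi}{4}\right)$ you then state is nevertheless the correct one.
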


\begin{proof}
 We make the change of the variable $t \mapsto \sqrt{2}\eta_{p,k}^\frac{1}{2}t$ in the integral $S_p^{(4,k)}$:
\begin{equation}\label{3.23}
\begin{aligned}
S_p^{(4,k)}=&\frac{2\sqrt{2}}{ \eta_{p,k}^{\frac{1}{2}}} \int\limits_{0}^{\eta_{p,k}^\frac{1}{2}} \sin(\eta_{p,k}-t^2)\di t
\\
=&\frac{\sqrt{2\pi}}{\eta_{p,k}^\frac{1}{2}}
 \sin\left(\eta_{p,k}-\frac{\pi}{4}\right)   - \frac{2\sqrt{2}}{\eta_{p,k}^\frac{1}{2}} \int\limits_{ \eta_{p,k}^\frac{1}{2}}^{+\infty} \sin(\eta_{p,k}-t^2)\di t.
\end{aligned}
\end{equation}
For the latter integral we have:
\begin{align*}
-&\int\limits_{\eta_{p,k}^{\frac{1}{2}}}^{+\infty}\sin (\eta_{p,k}-t^2)\di t = -\int\limits_{\eta_{p,k}^{\frac{1}{2}}}^{+\infty} \frac{d\cos(\eta_{p,k}-t^2)}{2t}= \frac{1}{2\eta_{p,k}^{\frac{1}{2}}} - \int\limits_{\eta_{p,k}^{\frac{1}{2}}}^{+\infty} \frac{\cos(\eta_{p,k}-t^2)}{2t^2}\di t,
\\
&\left|\int\limits_{\eta_{p,k}^{\frac{1}{2}}}^{+\infty}\frac{\cos(\eta_{p,k} -t^2)}{2t^2}\di t\right|\leqslant \frac{1}{2\eta_{p,k}^{\frac{1}{2}}},
 \end{align*}
and therefore,
\begin{equation*}
0\leqslant -\int\limits_{\eta_{p,k}^{\frac{1}{2}}}^{+\infty}\sin (\eta_{p,k}-t^2)\di t\leqslant \frac{1}{\eta_{p,k}^{\frac{1}{2}}}.
\end{equation*}
Hence, by (\ref{3.24}) and  (\ref{3.23}) we get
\begin{align*}
\left| \sum\limits_{k=1}^{\infty}\frac{\cos\a_{p,k}}{k}S_p^{(4,k)} -
\frac{\xi^\frac{1}{2}}{\ell^\frac{1}{4}} \sum\limits_{k=1}^{\infty} \frac{\sin\left(\eta_{p,k}-\frac{\pi}{4}\right)}{(k^2+p^2\xi^2)^\frac{3}{4}}
\right| \leqslant &\frac{\sqrt{2}\xi}{\pi \ell^\frac{1}{2}}\sum\limits_{k=1}^{\infty} \frac{1}{k^2+p^2\xi^2}
 \\
 =& \frac{1}{\sqrt{2}\ell^{\frac{1}{2}}} \left(\frac{\coth(\pi p\xi)}{p\xi}-\frac{1}{\pi p^2\xi^2}\right).
\end{align*}
The proof is complete.
\end{proof}

Identities (\ref{4.4}), (\ref{3.43}), (\ref{3.44}), estimate (\ref{3.45}) and  Lemma~\ref{lm3.5} yield:
\begin{equation}\label{3.16a}
\left|S_p^{(2)}(\ell)-
\frac{\xi^\frac{1}{2}\ell^\frac{1}{4}}{\pi} \sum\limits_{k=1}^{\infty} \frac{\sin\left(\eta_{p,k}-\frac{\pi}{4}\right)}{(k^2+p^2\xi^2)^\frac{3}{4}}
\right| \leqslant \frac{\sqrt{2}}{ \ell^\frac{1}{2}} \left(\frac{\coth(\pi p\xi)}{p\xi}-\frac{1}{\pi p^2\xi^2}\right).
\end{equation}

The derivative
\begin{align*}
\left(\frac{\coth z}{z}-\frac{1}{z^2}\right)'=\frac{1}{z^2\sinh z} \left(1-\frac{z}{\sinh z} + \frac{4\cosh \frac{z}{2}}{z} \left(\sinh\frac{z}{2}-\frac{z}{2}\cosh\frac{z}{2}\right)
\right)
\end{align*}
is non-positive since
\begin{equation*}
1-\frac{z}{\sinh z}\leqslant 0,\qquad \sinh z - z\cosh z\leqslant 0.
\end{equation*}
Two latter inequalities can be easily checked by calculating the values at zero and the derivatives of the functions in their left hand sides. Then
\begin{equation*}
0\leqslant\frac{\sqrt{2}}{\pi} \left(\frac{\coth\pi z}{z}-\frac{1}{\pi z^2}\right)\leqslant \lim\limits_{z\to 0} \frac{\sqrt{2}}{\pi} \left(\frac{\coth\pi z}{z}-\frac{1}{\pi z^2}\right)=\frac{\sqrt{2}}{3}.
\end{equation*}
Hence, by (\ref{4.2}), (\ref{4.7}), (\ref{3.16a}) and definition (\ref{2.4}) of the function $\vp_p$  we infer that the coefficient $a_p$ can be represented as
\begin{align}\label{3.28}
&\;a_p(\ell)=\ell^\frac{1}{4}\vp_p(\xi,\ell) +S_p^{(5)}(\ell),
\\
&
\begin{aligned}
|S_p^{(5)}(\ell)|\leqslant & \frac{\sqrt{2}}{3} + \frac{1}{2\pi} +  \frac{1}{32\pi^2\xi \ell^\frac{1}{4}p^\frac{5}{2}}\sqrt{9+\frac{25}{1024\pi^2p^2 \ell}}.
\end{aligned}\label{3.19}
\end{align}

\section{Finitely many gaps}

In this section we prove Theorems~\ref{th2.1},~\ref{th2.3}. We follow the lines of work \cite{DT} with certain minor modifications.

We begin with an auxiliary lemma.

\begin{lemma}\label{lm4.1}
The estimates
\begin{align*}
&\sup\limits_{\tau\in\left[-\tauo,\tauo\right]} N_0(\ell,\tau) \geqslant a_0(\ell) + \frac{1}{2}\sup\limits_{p\in\mathds{N}}\{|a_p(\ell)|\},
\\
&\inf\limits_{\tau\in\left[-\tauo,\tauo\right]} N_0(\ell,\tau) \leqslant a_0(\ell) - \frac{1}{2}\sup\limits_{p\in\mathds{N}}\{|a_p(\ell)|\},
\end{align*}
hold true.
\end{lemma}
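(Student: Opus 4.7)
The plan is to exploit the Fourier series $N_0(\ell,\tau)=a_0(\ell)+2\sum_{p=1}^{\infty}a_p(\ell)\cos(2\pi p\tau)$ established earlier in this section, combined with a standard weighted-average argument against the non-negative trigonometric kernels $1\pm\cos(2\pi p\tau)$.

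First, I would fix an arbitrary $p\in\mathds{N}$ and pair $N_0(\ell,\cdot)$ with the kernel $1\pm\cos(2\pi p\tau)$ on $[-\tauo,\tauo]$. Each such kernel is non-negative with unit mass, since $\int_{-\tauo}^{\tauo}\bigl(1\pm\cos(2\pi p\tau)\bigr)\di\tau=1$. Using the orthogonality of the cosines on this interval and termwise integration of the Fourier series, I would establish the identity
\[
\int\limits_{-\tauo}^{\tauo}N_0(\ell,\tau)\bigl(1\pm\cos(2\pi p\tau)\bigr)\di\tau=a_0(\ell)\pm a_p(\ell).
\]
Since the left-hand side is a weighted average of $N_0(\ell,\cdot)$ against a non-negative unit-mass kernel, it is trapped between $\inf_\tau N_0(\ell,\tau)$ and $\sup_\tau N_0(\ell,\tau)$. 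Applying this bound for both choices of sign and combining the two yields, for every $p\in\mathds{N}$,
\[
\sup_\tau N_0(\ell,\tau)\geqslant a_0(\ell)+|a_p(\ell)|,\qquad \inf_\tau N_0(\ell,\tau)\leqslant a_0(\ell)-|a_p(\ell)|.
\]
Taking the supremum over $p\in\mathds{N}$ immediately gives both inequalities stated in the lemma, in fact with the constant $1$ in place of $\tfrac{1}{2}$, so the weaker form with the factor $\tfrac{1}{2}$ follows a fortiori.

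There is no genuine obstacle here. The only technical point is the termwise integration of the Fourier series against a single cosine, which is legitimate because $N_0(\ell,\tau)$ is an integer-valued step function in $\tau$ — a finite sum of indicators of the intervals in $\tau$ defined by the lattice condition $(n+\tau)^2+\xi^2 m^2\leqslant\ell$ — and therefore belongs to $L^2([-\tauo,\tauo])$, which is more than enough for Parseval and orthogonality to apply classically. The factor $\tfrac{1}{2}$ in the author's formulation appears to be harmless slack, perhaps retained for symmetric manipulations further on; the argument above recovers the sharper constant with no additional effort.
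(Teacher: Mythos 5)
Your proof is correct, and it takes a genuinely different (and in fact sharper) route than the paper's. The paper sets $\tilde N^0=N_0-a_0$, splits it into its positive and negative parts, uses $\int_{-1/2}^{1/2}\tilde N^0\,d\tau=0$ to get $\int\tilde N^0_+\,d\tau=\tfrac12\int|\tilde N^0|\,d\tau$, bounds $|a_p|\leqslant\int|\tilde N^0|\,d\tau$ via $|\cos(2\pi p\tau)|\leqslant1$, and then compares the supremum of $\tilde N^0_+$ with its mean over the unit-length interval; this is exactly where the factor $\tfrac12$ comes from. You instead test $N_0$ against the non-negative, unit-mass kernels $1\pm\cos(2\pi p\tau)$, which traps $a_0\pm a_p$ between the infimum and the supremum and delivers both inequalities with constant $1$, strictly stronger than the stated lemma. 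One small simplification is available to you: there is no need to integrate the Fourier series termwise, since $a_0$ and $a_p$ are \emph{defined} in (\ref{3.8}) as the integrals $\int N_0\,d\tau$ and $\int N_0\cos(2\pi p\tau)\,d\tau$, so the identity $\int N_0\,(1\pm\cos(2\pi p\tau))\,d\tau=a_0\pm a_p$ is immediate from linearity; boundedness and measurability of the step function $N_0(\ell,\cdot)$ is all that is needed. Your sharper constant would propagate to marginally better explicit thresholds such as $\ell_1$ in (\ref{2.11}), though the paper's weaker form suffices for everything that follows.
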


\begin{proof}
We introduce the functions
\begin{align*}
&\tilde{N}^0(\tau,\ell):=N_0(\ell,\tau)-a_0(\ell),
\\
&\tilde{N}^0_+(\tau,\ell):=\max\{\tilde{N}^0(\tau,\ell),0\},
\\
&\tilde{N}^0_-(\tau,\ell):=\min\{\tilde{N}^0(\tau,\ell),0\}.
\end{align*}
These functions obey  the identities
\begin{align*}
\int\limits_{-\tauo}^{\tauo} \tilde{N}^0(\tau,\ell)\di\tau=&0,
\\
\int\limits_{-\tauo}^{\tauo} |\tilde{N}^0(\tau,\ell)|\di\tau = & \int\limits_{-\tauo}^{\tauo} \tilde{N}^0_+(\tau,\ell)\di\tau - \int\limits_{-\tauo}^{\tauo} \tilde{N}^0_-(\tau,\ell)\di\tau
\\
=& 2 \int\limits_{-\tauo}^{\tauo} \tilde{N}^0_+(\tau,\ell)\di\tau = -2 \int\limits_{-\tauo}^{\tauo} \tilde{N}^0_-(\tau,\ell)\di\tau
\end{align*}
and therefore,
\begin{align*}
&\inf\limits_{\tau\in\left[-\tauo,\tauo\right]} \tilde{N}^0_-(\tau,\ell)=\inf\limits_{\tau\in\left[-\tauo,\tauo\right]} \tilde{N}^0(\tau,\ell)\leqslant 0,
\\
&\hphantom{_{\tau\in[.}} 0\leqslant \sup\limits_{\tau\in\left[-\tauo,\tauo\right]} \tilde{N}^0(\tau,\ell)=\sup\limits_{\tau\in\left[-\tauo,\tauo\right]} \tilde{N}^0_+(\tau,\ell).
\end{align*}
Hence, by definition (\ref{3.8}) of $a_p(\ell)$
we obtain immediately
\begin{align*}
& \sup\limits_{\tau\in\left[-\tauo,\tauo\right]} \tilde{N}^0(\tau,\ell)\geqslant
\int\limits_{-\tauo}^{\tauo} \tilde{N}^0_+(\tau,\ell) \di\tau=\frac{1}{2}\int\limits_{-\tauo}^{\tauo} |\tilde{N}^0(\tau,\ell)|\di\tau\geqslant \frac{1}{2}|a_p(\ell)|,
\\
& \inf\limits_{\tau\in\left[-\tauo,\tauo\right]} \tilde{N}^0(\tau,\ell)\leqslant
\int\limits_{-\tauo}^{\tauo} \tilde{N}^0_-(\tau,\ell) \di\tau=-\frac{1}{2}\int\limits_{-\tauo}^{\tauo} |\tilde{N}^0(\tau,\ell)|\di\tau\leqslant -\frac{1}{2}|a_p(\ell)|.
\end{align*}
These inequalities and the definition of the function $\tilde{N}^0(\tau,\ell)$ imply the statement of the lemma.
\end{proof}

By identity (\ref{3.28}) and inequality (\ref{3.19}) we can estimate the supremum of $|a_p(\ell)|$ from below:
\begin{align}\label{4.14}
&
\begin{aligned}
\sup\limits_{p} \{|a_p(\ell)|\}  \geqslant &\ell^{\frac{1}{4}} \sup\limits_{p}\{|\vp_p(\xi,\ell)|\} - \sup\limits_{p} \{|S_p^{(5)}(\ell)|\} \\
\geqslant & \ell^{\frac{1}{4}} \sup\limits_{p}\{|\vp_p(\xi,\ell)|\}  - S^{(6)}(\ell),
\end{aligned}
\\
&
\begin{aligned}
S^{(6)}(\ell)=  \frac{\sqrt{2}}{3} + \frac{1}{2\pi} +  \frac{1}{32\pi^2\xi \ell^\frac{1}{4}}\sqrt{9+\frac{25}{1024\pi^2 \ell}}.
\end{aligned}\label{4.16a}
\end{align}

It is easy to confirm that
\begin{gather*}
S^{(6)}(\ell)\leqslant \frac{c_0}{2}\ell^{\frac{1}{4}-\g} \quad\text{as}\quad \ell\geqslant \ell_{2},
\\
\ell_{2}:=\max\left\{\ell_0,\left(\frac{4\sqrt{2}\pi+6}{3\pi c_0}\right)^\frac{4}{1-4\g},\left(\frac{1}{8\pi^2\xi c_0}\sqrt{9+\frac{25}{1024\pi^2}}\right)^\frac{2}{1-2\g}\right\},
\end{gather*}
where $\g$ comes from condition (\ref{2.3}). Hence, by condition (\ref{2.3}) and Lemma~\ref{lm4.1}, as $\ell\geqslant \ell_2$,
the estimates hold:
\begin{equation}\label{4.16}
\begin{aligned}
&\sup\limits_{\tau\in\left[-\tauo,\tauo\right]} N_0(\ell,\tau) \geqslant a_0(\ell) + \frac{c_0}{2} \ell^{\frac{1}{4}-\g},
\\
&\inf\limits_{\tau\in\left[-\tauo,\tauo\right]} N_0(\ell,\tau) \leqslant a_0(\ell) -
\frac{c_0}{2}\ell^{\frac{1}{4}-\g}.
\end{aligned}
\end{equation}

Let $[\eta_k^0,\tht_k^0]$, $k\geqslant 1$, be the $k$th band of the operator $\Op$ in the case $\cL=0$, that is,
\begin{equation*}
\min\limits_{\tau\in\left[-\tauo,\tauo\right]} E_k^0(\tau)=\eta_k^0,\qquad \max\limits_{\tau\in\left[-\tauo,\tauo\right]} E_k^0(\tau)=\tht_k^0.
\end{equation*}
By the definition of the counting function $N_0(\ell,\tau)$, for a fixed $\ell$, the number of the band functions $E_k^0(\tau)$ whose minima do not exceed $\frac{\pi^2 \ell}{T^2}$ is equal to $\sup\limits_{\tau\in\left[-\tauo,\tauo\right]} N_0(\ell,\tau)$, while $\inf\limits_{\tau\in\left[-\tauo,\tauo\right]} N_0(\ell,\tau)$ is the number of the band functions $E_k^0(\tau)$ whose maxima do not exceed $\frac{\pi^2 \ell}{T^2}$. Hence, for each $k\geqslant 1$,
\begin{equation*}
\sup\limits_{\tau\in\left[-\tauo,\tauo\right]} N_0\left(\frac{T^2 \eta_k^0}{\pi^2},\tau\right)=k,\qquad \inf\limits_{\tau\in\left[-\tauo,\tauo\right]} N_0\left(\frac{T^2 \tht_k^0}{\pi^2},\tau\right)=k.
\end{equation*}
Assuming now
\begin{equation}\label{5.13}
\eta_k^0\geqslant \frac{\pi^2}{T^2} \ell_{2},
\end{equation}
by (\ref{4.16}) we obtain
\begin{equation*}
k+1 \geqslant a_0\left(\frac{T^2}{\pi^2}\eta_{k+1}^0\right) + \frac{c_0}{2}\left(\frac{T^2}{\pi^2}\eta_{k+1}^0\right)^{\frac{1}{4}-\g},
\quad
k\leqslant a_0\left(\frac{T^2}{\pi^2}\tht_k^0\right) - \frac{c_0}{2}\left(\frac{T^2}{\pi^2}\tht_k^0\right)^{\frac{1}{4}-\g}.
\end{equation*}
The operator $\Op$ as $\cL=0$ is the Dirichlet Laplacian and its spectrum has no internal spectral gaps. Therefore, $\eta_{k+1}^0\leqslant \tht_k^0$ and by Lemma~\ref{lm3.2}
and the inequality
\begin{equation*}
\a^2+2\a\b\leqslant \frac{4}{3}\a^2+3\b^2,\qquad \a,\b\geqslant0,
\end{equation*}
this implies:
\begin{equation}\label{4.14a}
\begin{aligned}
\frac{T}{2\pi\xi}\left(\frac{4}{3}(\tht_k^0-\eta_{k+1}^0) + 3\xi^2
\right)\geqslant &
\frac{T}{2\pi\xi}(\tht_k^0-\eta_{k+1}^0)+\frac{T}{\pi}\sqrt{
\tht_k^0-\eta_{k+1}^0
}
\\
\geqslant & a_0\left(\frac{T^2}{\pi^2}\tht_k^0\right) - a_0\left(\frac{T^2}{\pi^2}\eta_{k+1}^0\right)
\\
\geqslant & c_0\left(\frac{T}{\pi}\right)^{\frac{1}{2}-2\g}\big(
(\tht_k^0)^{\frac{1}{4}-\g} + (\eta_{k+1}^0)^{\frac{1}{4}-\g}\big)-1
\\
\geqslant & 2
c_0\left(\frac{T^2}{\pi^2}\eta_k^0\right)^{\frac{1}{4}-\g}
-1.
\end{aligned}
\end{equation}
Hence,
\begin{equation}\label{5.12}
\tht_k^0-\eta_{k+1}^0\geqslant \frac{3
\pi \xi
c_0}{T}\left(\frac{T^2}{\pi^2}\eta_k^0\right)^{\frac{1}{4}-\g}
-\frac{3\pi\xi}{2T}-\frac{9\xi^2}{4}.
\end{equation}
Since $\eta_k^0\to+\infty$ as $k\to+\infty$, the above estimate means that the length of the overlapping of the bands in the spectrum of $\Op$ as $\cL=0$ grows as $k\to+\infty$.

Let $[\eta_k,\tht_k]$, $k\geqslant 1$, be the spectral bands of the operator $\Op$ for a given operator $\cL$. In view of definition (\ref{2.5}) of $\om_\pm$  and the minimax principle for each $k$ we have
\begin{equation}\label{5.31}
\eta_k^0+\om_- \leqslant \eta_k \leqslant \eta_k^0 + \om_+,\qquad
\tht_k^0+\om_- \leqslant \tht_k \leqslant \tht_k^0 + \om_+.
\end{equation}
Hence, by (\ref{5.12}), the bands $[\eta_k,\tht_k]$ overlap for sufficiently large $k$, namely, as
\begin{equation*}
\frac{3\pi \xi c_0}{T} \left(\frac{T^2}{\pi^2}\eta_k^0\right)^{\frac{1}{4}-\g}
-\frac{3\pi\xi}{2T}-\frac{9}{4}\xi^2\geqslant \om_\cL,
\end{equation*}
or, equivalently, as
\begin{equation*}
\left(\frac{T^2}{\pi^2}\eta_k^0\right)^{\frac{1}{4}-\g}
\geqslant \frac{T}{4c_0\pi\xi}\om_\cL+\frac{1}{2c_0}+\frac{3\xi T}{4\pi c_0}.
\end{equation*}
In addition, condition  (\ref{5.13}) should be satisfied.  Both these conditions are true if
\begin{equation*}
\eta_k^0\geqslant \ell_{1}-\om_-,
\end{equation*}
where $\ell_{1}$ was introduced in (\ref{2.11}).
And   by (\ref{5.31})  we conclude that the operator $\Op$ surely has no spectral gaps in $[\ell_1,+\infty)$. This completes the proof of Theorem~\ref{th2.1}.

We proceed to proving Theorem~\ref{th2.3}. We have:
\begin{align*}
&\left|\sum\limits_{k\in\mathds{Z}\setminus\{0\} }  \frac{\sin\left( 2\pi  \ell^{\frac{1}{2}} \sqrt{\frac{k^2}{\xi^2}+p^2} -\frac{\pi}{4}\right)}{\left(\frac{k^2}{\xi^2}+p^2\right)^\frac{3}{4}} \right| \leqslant 2 \sum\limits_{k=1}^{\infty}\frac{\xi^\frac{3}{2}}{k^\frac{3}{2}}= 2\z\left(\frac{3}{2}\right)\xi^\frac{3}{2}.
\end{align*}
Therefore,
\begin{equation*}
|\vp_p(\ell)|\geqslant \frac{\left|\sin\left( 2\pi  \ell^{\frac{1}{2}} p -\frac{\pi}{4}\right)\right|}{\pi\xi p^\frac{3}{2}} - 2\z\left(\frac{3}{2}\right)\xi^\frac{3}{2}
\end{equation*}
and
\begin{equation}\label{5.19}
\sup\limits_{p} |\vp_p(\ell)|\geqslant \frac{1}{\pi\xi} \left(\max\left\{
\left|\sin\left( 2\pi  \ell^{\frac{1}{2}}  -\frac{\pi}{4}\right)\right|,\,3^{-\frac{3}{2}}\left|\sin\left( 6\pi  \ell^{\frac{1}{2}}  -\frac{\pi}{4}\right)\right|
\right\}-2\z\left(\frac{3}{2}\right)\xi^\frac{3}{2}\right).
\end{equation}

Denote $z=2\pi \ell^\frac{1}{2}-\tfrac{\pi}{4}$, then
\begin{equation}\label{5.20}
\max\left\{ \left|\sin\left( 2\pi  \ell^{\frac{1}{2}}  -\frac{\pi}{4}\right)\right|,\,3^{-\frac{3}{2}}\left|\sin\left( 6\pi  \ell^{\frac{1}{2}}  -\frac{\pi}{4}\right)\right|
\right\}=\max\left\{
|\sin z|,\,3^{-\frac{3}{2}}|\cos 3z|
\right\}.
\end{equation}

The function $|\sin z|$ is $\pi$-periodic and
$3^{-\frac{3}{2}}|\cos 3z|$ is $\tfrac{\pi}{3}$-periodic and
$$
|\sin z|=|\sin(\pi-z)|,\qquad |\cos 3z|=|\cos 3(\pi-z)|,\qquad z\in[0,\pi].
$$
The function $|\sin z|$ increases from $0$ to $1$ and the function $|\cos 3z|$ decreases from $1$ to $0$ as $z\in[0,\tfrac{\pi}{6}]$. Then it is straightforward to confirm that
\begin{equation*}
\max\left\{
|\sin z|,\,3^{-\frac{3}{2}}|\cos 3z|
\right\} =
\begin{cases}
3^{-\frac{3}{2}}|\cos 3z|, \quad & z\in[0,z_0]\cup[\pi-z_0,\pi],
\\
|\sin z|,\quad & z_0\leqslant z\leqslant \pi-z_0,
\end{cases}
\end{equation*}
where $z_0\in(0,\tfrac{\pi}{6})$ is the root of the equation
\begin{equation}\label{5.22}
\sin z-3^{-\frac{3}{2}}\cos 3z=0.
\end{equation}
This implies immediately that
\begin{equation}\label{5.21}
\min\limits_{z\in[0,\pi]} \max\left\{
|\sin z|,\,3^{-\frac{3}{2}}|\cos 3z|
\right\} =\sin z_0=\frac{\tan z_0}{\sqrt{\tan^2 z_0+1}}.
\end{equation}
Equation (\ref{5.22}) is reduced to the third order equation for $\tan z$:
\begin{equation*}
3^\frac{3}{2}\tan^3 z + 3 \tan^2 z + 3^\frac{3}{2}\tan z - 1=0,
\end{equation*}
which can be solved explicitly:
$\tan z_0=c_2$.
Hence, by (\ref{5.21}),
\begin{equation*}
\min\limits_{z\in[0,\pi]}\max\left\{
|\sin z|,\,3^{-\frac{3}{2}}|\cos 3z|
\right\}= c_1:=\frac{c_2}{\sqrt{c_2^2+1}}
\end{equation*}
and it follows from (\ref{5.19}), (\ref{5.20}) that
\begin{equation*}
\sup\limits_{p} |\vp_p(\ell)|\geqslant \frac{c_1-2\z\left(\frac{3}{2}\right)\xi^\frac{3}{2}}{\pi\xi}.
\end{equation*}
This completes the proof.

\section{Absence of gaps}

In this section we prove Theorem~\ref{th2.2}.
Replacing the operator $\cL$ by $\tilde{\cL}:=\cL-\om_-$, we just shift the spectrum of the operator $\Op$ and therefore, it is sufficient to prove the theorem for the operator $\tilde{\cL}$. The advantage of using such operator instead of $\cL$ is that the constant $\om_-$ defined by (\ref{2.5}) is zero and we have
\begin{equation}\label{5.1}
0\leqslant (\tilde{\cL}u,u)_{\ell_2(\square)}\leqslant \om_\cL \|u\|_{\ell_2(\square)}^2
\end{equation}
for all $u\in \ell_2(\square)$. This is why from the very beginning we assume that for the operator $\cL$ we have $\om_-=0$ and inequality (\ref{5.1}) is satisfied.

The proof consists of two parts. In the first part we prove the absence of the gaps in the lower part of the spectrum, namely, below the point $\tfrac{\pi^2}{T^2}$. Here we employ the approach suggested in \cite[Sect. 5.2]{UMJ18}. In the second part of the proof we show the absence of the gaps in the higher part of the spectrum, that is, above the point $\tfrac{\pi^2}{T^2}$. This will be done by the approach employed in the previous section.

We begin with studying the lower part of the spectrum. Similar to (\ref{3.3}), we introduce the counting function for the operator $\Op$ with a given operator $\cL$:
\begin{equation*}
N(\ell,\tau)=\#\left\{E_k(\tau):\, E_k^0(\tau)\leqslant \frac{\pi^2 \ell}{T^2}\right\}.
\end{equation*}
By the minimax principle and (\ref{5.1}) we have the estimates
\begin{equation}\label{5.25}
E_k^0(\tau)\leqslant E_k(\tau)\leqslant E_k^0(\tau)+\om_\cL
\end{equation}
and therefore,
\begin{equation}\label{5.26}
N_0\left(\ell-\frac{T^2}{\pi^2}\om_\cL,\tau\right) \leqslant N(\ell,\tau) \leqslant N_0\left(\ell,\tau\right).
\end{equation}
The operator $\Op$ has no gaps in $\big[\inf\spec(\Op),\tfrac{\pi^2}{T^2}\big]$ if for all $\ell\in\big[\tfrac{T^2}{\pi^2}\inf\spec(\Op),1\big]$
the estimate holds:
\begin{equation*}
\sup\limits_{\tau\in\left[-\tauo,\tauo\right]} N(\ell,\tau)- \inf\limits_{\tau\in\left[-\tauo,\tauo\right]} N(\ell,\tau)\geqslant 1.
\end{equation*}
Since the function $N(\ell,\tau)$ is integer-valued, to ensure the above inequality, it is sufficient to find $\tau_{min},\tau_{max}\in[-\tauo,\tauo]$ such that
\begin{equation*}
N(\ell,\tau_{max})- N(\ell,\tau_{min})>0.
\end{equation*}
Hence, in view of (\ref{5.26}), it is sufficient to show that
\begin{equation}\label{5.27}
N_0\left(\ell-\frac{T^2}{\pi^2}\om_\cL,\tau_{max}\right) -
N_0\left(\ell,\tau_{min}\right)>0\quad \text{as}\quad \ell\in\big[\tfrac{T^2}{\pi^2}\inf\spec(\Op),1\big].
\end{equation}
Exactly this inequality will be checked in the first part of the proof.

It is straightforward to confirm that condition (\ref{2.9}) implies the estimate
\begin{equation}
0\leqslant \frac{T^2}{\pi^2}\om_\cL<\frac{1}{4}+\xi^2.\label{2.7}
\end{equation}
By (\ref{3.6}), the band function $E_1^0$ is given by the formula
\begin{equation*}
E_1^0(\tau)=\frac{\pi^2}{T^2}(\tau^2+\xi^2m^2)
\end{equation*}
and hence, due to (\ref{5.25}) and (\ref{2.7}), the first spectral band of $\Op$ is at least
$$
\left[\inf\spec(\Op),\, \frac{\pi^2}{T^2}\left(\frac{1}{4}+\xi^2\right)\right].
$$
This is why, in what follows we need to prove the absence of gaps only for \begin{equation*}
\l>\frac{\pi^2}{T^2}\left(\frac{1}{4}+\xi^2\right).
\end{equation*}
In terms of the parameter $\ell$ used in (\ref{5.27}), this means to study the case
\begin{equation*}
\ell>\frac{1}{4}+\xi^2.
\end{equation*}

Apart of (\ref{3.7}), the counting function $N_0(\ell,\tau)$ possesses one more representation:
\begin{equation}\label{5.29}
N_0(\ell,\tau)=\sum\limits_{n=-\lfloor\ell^\frac{1}{2}+\tau\rfloor}^{ \lfloor\ell^\frac{1}{2}-\tau\rfloor} \left\lfloor\frac{\sqrt{\ell-(n+\tau)^2}}{\xi}\right\rfloor.
\end{equation}

Consider the equation
\begin{equation}\label{5.30}
2\frac{\sqrt{\ell-\frac{T^2}{\pi^2}\om_\cL-\frac{1}{4}}}{\xi}-1 =\frac{\sqrt{\ell-\frac{T^2}{\pi^2}\om_\cL}}{\xi}.
\end{equation}
Its positive root is given by the formula
\begin{equation*}
\ell_*=\frac{(\sqrt{3+4\xi^2}+\xi)^2}{9} + \frac{T^2}{\pi^2}\om_\cL.
\end{equation*}
Conditions (\ref{2.7}), (\ref{2.8}) imply that
\begin{equation*}
\frac{1}{4}+\xi^2<\ell_*<\frac{2}{3}.
\end{equation*}
As $\ell\leqslant \ell_*$, we let $\tau_{max}:=0$, $\tau_{min}:=1-\ell^\frac{1}{2}$ in (\ref{5.27}) and by (\ref{5.29}) we obtain
\begin{equation*}
N_0(\ell,\tau_{min})=\left\lfloor\frac{\sqrt{2\ell^\frac{1}{2}-1}}{\xi}\right\rfloor,
\qquad N_0(\ell,\tau_{max})=\left\lfloor\frac{\sqrt{\ell-\frac{T^2}{\pi^2}\om_\cL}}{\xi}\right\rfloor.
\end{equation*}
Hence, thanks to condition (\ref{2.9}),
\begin{align*}
N_0\left(\ell-\frac{T^2}{\pi^2}\om_\cL,\tau_{max}\right) - N_0(\ell,\tau_{min})\geqslant & \frac{\sqrt{\ell-\frac{T^2}{\pi^2}\om_\cL}}{\xi}- \frac{\sqrt{2\ell^\frac{1}{2}-1}}{\xi}-1
\\
\geqslant & \frac{\sqrt{\ell_*-\frac{T^2}{\pi^2}\om_\cL}}{\xi}- \frac{\sqrt{2\ell_*^\frac{1}{2}-1}}{\xi}-1>0.
\end{align*}

As $\ell_*<\ell<1$, we choose $\tau_{max}=\tfrac{1}{2}$ and by (\ref{5.29}) we get
\begin{equation*}
N_0\left(\ell-\frac{T^2}{\pi^2}\om_\cL,\tau_{max}\right) = 2 \left(\frac{\sqrt{\ell-\frac{T^2}{\pi^2}\om_\cL-\frac{1}{4}}}{\xi}\right).
\end{equation*}
Hence, by (\ref{5.30}) and (\ref{2.9}),
\begin{align*}
N_0\left(\ell-\frac{T^2}{\pi^2}\om_\cL,\tau_{max}\right) - N_0(\ell,\tau_{min})\geqslant & \frac{2\sqrt{\ell-\frac{T^2}{\pi^2}\om_\cL-\frac{1}{4}}}{\xi}- \frac{\sqrt{2\ell^\frac{1}{2}-1}}{\xi}-2
\\
= & \frac{\sqrt{\ell_*-\frac{T^2}{\pi^2}\om_\cL}}{\xi}- \frac{\sqrt{2\ell_*^\frac{1}{2}-1}}{\xi}-1>0.
\end{align*}

In the remaining part of the proof we study the case $\ell\geqslant 1$ and here we shall employ the same approach as in the previous section.
Thanks to Theorem~\ref{th2.3} and condition (\ref{2.8}), estimate (\ref{2.3}) holds for $\ell\geqslant 1$. Then by (\ref{4.14}) and Lemma~\ref{lm4.1} we can improve estimates (\ref{4.16}):
\begin{align*}
&\sup\limits_{\tau\in\left[-\tauo,\tauo\right]} N_0(\ell,\tau) \geqslant a_0(\ell) + \frac{c_1-2\z\left(\frac{3}{2}\right)\xi^\frac{3}{2}}{\pi\xi} \ell^\frac{1}{4} - S^{(6)}(\ell),
\\
&\inf\limits_{\tau\in\left[-\tauo,\tauo\right]} N_0(\ell,\tau) \leqslant a_0(\ell) -\frac{c_1-2\z\left(\frac{3}{2}\right)\xi^\frac{3}{2}}{\pi\xi} \ell^\frac{1}{4} + S^{(6)}(\ell).
\end{align*}
In the same way how inequalities (\ref{4.14a}), (\ref{5.12}) were obtained, by Lemma~\ref{lm3.2} we get:
\begin{align*}
&\frac{T}{\pi}\sqrt{\tht_k^0-\eta_{k+1}^0} +
\frac{T}{2\pi\xi} (\tht_k^0-\eta_{k+1}^0) \geqslant 2\frac{c_1-2\z\left(\frac{3}{2}\right)\xi^\frac{3}{2}}{\pi\xi} \left(\frac{T^2}{\pi^2}\eta_k^0\right)^\frac{1}{4} - 2 S^{(6)}\left(
\frac{T^2}{\pi^2}\eta_k^0\right)-1,
\\
&\tht_k^0-\eta_{k+1}^0 \geqslant \frac{3}{T}\left(c_1-2\z\left(\frac{3}{2}\right)\xi^\frac{3}{2} \right)
\left(\frac{T^2}{\pi^2}\eta_k^0\right)^\frac{1}{4} - \frac{3\pi\xi}{2T} S^{(6)}\left(
\frac{T^2}{\pi^2}\eta_k^0\right)-\frac{3\pi\xi}{2T}-\frac{9\xi^2}{4}.
\end{align*}
Therefore, by inequality (\ref{5.1}) and the minimax principle, we have $\tht_k\geqslant \eta_{k+1}$ once
\begin{equation*}
3\left(c_1-2\z\left(\frac{3}{2}\right)\xi^\frac{3}{2} \right)
\left(\frac{T^2}{\pi^2}\eta_k^0\right)^\frac{1}{4} - \frac{3\pi\xi}{2} S^{(6)}\left(
\frac{T^2}{\pi^2}\eta_k^0\right)- \frac{3\pi\xi}{2}-\frac{9\pi\xi^2 }{4}\geqslant T\om_\cL
\end{equation*}
for all $\eta_k^0\geqslant \tfrac{\pi^2}{T^2}$; the latter condition corresponds to the assumed inequality $\ell\geqslant 1$. Denoting $\ell:=\tfrac{T^2}{\pi^2}\eta_k^0$, we rewrite the above inequality as
\begin{equation*}
\ell^\frac{1}{4} \left(c_1-2\z\left(\frac{3}{2}\right)\xi^\frac{3}{2}-\frac{\pi\xi}{2} \ell^{-\frac{1}{4}} S^{(6)}(\ell)\right) - \frac{\pi\xi}{2} -\frac{3\pi\xi^2}{4}- \frac{T\om_\cL}{4}\geqslant 0
\end{equation*}
and this should hold for all $\ell\geqslant 1$. Explicit formula (\ref{4.16a}) for $S^{(6)}(\ell)$ implies immediately that this inequality is true for all $\ell\geqslant 1$ provided it holds as $\ell=1$. As $\ell=1$, up to obvious transformations, this inequality coincides with condition (\ref{2.10}). This completes the proof of Theorem~\ref{th2.2}.

\section{Discussion of condition (\ref{2.3})}

In this section we discuss the functions $\vp_p(\ell)$ and condition (\ref{2.3}).  Our main conjecture motivated by numerical tests is that condition (\ref{2.3}) holds for all $\xi$ with $\g=0$. The first possible steps in proving this conjecture are as follows.

We begin with a simple bound for $\vp_p(\ell)$. We have
\begin{equation*}
|\vp_p(\ell)|\leqslant \frac{1}{\pi\xi} \sum\limits_{k\in\mathds{Z}} \frac{1}{\left(\frac{k^2}{\xi^2}+p^2\right)^\frac{3}{4}} =\frac{1}{\pi p^\frac{3}{2}\xi} + \frac{2}{\pi} \xi^\frac{1}{2} \sum\limits_{k=1}^{\infty} \frac{1}{\left(\frac{k^2}{\xi^2}+p^2\right)^\frac{3}{4}} .
\end{equation*}
The function $t\mapsto (t^2+p^2\xi^2)^{-\frac{3}{4}}$ decreases monotonically in $t\in[0,+\infty)$ and hence,
\begin{align*}
|\vp_p(\ell)|\leqslant &\frac{1}{\pi p^\frac{3}{2}\xi} + \frac{2}{\pi}\xi^\frac{1}{2} \int\limits_{0}^{+\infty} \frac{\di t}{(t^2+p^2\xi^2)^\frac{3}{4}} = \frac{1}{\pi p^\frac{3}{2}\xi} + \frac{2}{\pi p^\frac{1}{2}} \int\limits_{0}^{+\infty} \frac{\di t}{(t^2+1)^\frac{3}{4}}
\\
=& \frac{1}{\pi p^\frac{3}{2}\xi} + \frac{\Beta(\tfrac{1}{4},\tfrac{1}{2})}{\pi p^\frac{1}{2}},
\end{align*}
where $\Beta(\cdot,\cdot)$ is the Beta function. The obtained estimate yields that as $p\geqslant C_1\ell^\frac{1}{2}$, $C_1=const>0$, we have
\begin{equation*}
|\vp_p(\ell)|\leqslant \frac{1}{\pi C_1^\frac{1}{2}\ell^\frac{1}{4}} \left(\Beta\big(\tfrac{1}{4},\tfrac{1}{2}\big) + \frac{1}{C_1\xi\ell^\frac{1}{2}}\right).
\end{equation*}
Comparing this inequality with condition (\ref{2.3}), we immediately conclude that this condition can be reformulated as
\begin{equation}\label{6.1}
\sup\limits_{p\in\mathds{N},\ p\leqslant C_1\ell^\frac{1}{2}}|\vp_p(\xi,\ell)|\geqslant c_0 \ell^{-\g}.
\end{equation}

In a similar way we can simplify the functions $\vp_p(\ell)$
by replacing them with truncated series. Namely, given $N\in\mathds{N}$, we have
\begin{equation}\label{6.2}
\begin{aligned}
\frac{2}{\pi\xi} \left|\sum\limits_{k=N+1}^{\infty} \frac{\sin\left( 2\pi  \ell^{\frac{1}{2}} \sqrt{\frac{k^2}{\xi^2}+p^2} -\frac{\pi}{4}\right)}{\left(\frac{k^2}{\xi^2}+p^2\right)^\frac{3}{4}}\right| \leqslant &\frac{2\xi^\frac{1}{2}}{\pi} \int\limits_{k=N}^{\infty} \frac{\di t}{(t^2+p^2\xi^2)^\frac{3}{4}}
\\
\leqslant & \frac{2\xi^\frac{1}{2}}{\pi}\int\limits_{N}^{+\infty}\frac{\di t}{t^\frac{3}{2}} = \frac{4\xi^\frac{1}{2}}{\pi N^\frac{1}{2}}.
\end{aligned}
\end{equation}
We fix a constant $C_2>0$ and we truncate the series in (\ref{2.4}):
\begin{equation}\label{6.3}
\Phi_p(\ell):=\frac{1}{\pi\xi}  \sum\limits_{k=-[C_2\ell^\frac{1}{2}]}^{k=[C_2\ell^\frac{1}{2}]}
\frac{\sin\left( 2\pi  \ell^{\frac{1}{2}} \sqrt{\frac{k^2}{\xi^2}+p^2} -\frac{\pi}{4}\right)}{\left(\frac{k^2}{\xi^2}+p^2\right)^\frac{3}{4}}.
\end{equation}
Then by (\ref{6.2}) we get:
\begin{equation*}
|\vp_p(\ell)-\Phi_p(\ell)|\leqslant \frac{4\xi^\frac{1}{2}}{\pi C_2^\frac{1}{2}\ell^\frac{1}{4}}.
\end{equation*}
Hence, we can replace $\vp_p$ by $\Phi_p$ in (\ref{6.1}) and this leads us to an equivalent condition:
\begin{equation}\label{6.4}
\sup\limits_{p\in\mathds{N},\ p\leqslant C_1\ell^\frac{1}{2}}|\Phi_p(\xi,\ell)|\geqslant c_0 \ell^{-\g}.
\end{equation}

Despite the functions $\Phi_p$ are given explicitly by formula (\ref{6.3}), the structure of these functions is quite complicated. As $\ell$ varies, the functions $\Phi_p(\ell)$ oscillate in a non-periodic way having infinitely many zeroes. This non-periodic oscillation is the main obstacle in calculating the supremum in (\ref{6.4}).

A possible way to find such supremum could be to understand the behavior of $\Phi_p(\ell)$ or of $\phi_p(\ell)$ for large $\ell$, that is, the asymptotics as $\ell\to+\infty$. A naive attempt is to replace the series in (\ref{2.4}) by the integral
\begin{equation}\label{6.5}
\int\limits_{\mathds{R}}
\frac{\sin\left( 2\pi  \ell^{\frac{1}{2}} \sqrt{\frac{t^2}{\xi^2}+p^2} -\frac{\pi}{4}\right)\di t}{\left(\frac{t^2}{\xi^2}+p^2\right)^\frac{3}{4}},
\end{equation}
to calculate then the asymptotics of such integral and to try to estimate the error made while passing from the series in (\ref{2.4}) to integral (\ref{6.5}). The asymptotics of the latter integral can be found by the stationary phase method; the leading term is
\begin{equation*}
\int\limits_{\mathds{R}}
\frac{\sin\left( 2\pi  \ell^{\frac{1}{2}} \sqrt{\frac{t^2}{\xi^2}+p^2} -\frac{\pi}{4}\right)\di t}{\left(\frac{t^2}{\xi^2}+p^2\right)^\frac{3}{4}}=\frac{p^\frac{1}{2}}{\pi} \frac{\sin(2\pi p\ell^\frac{1}{2})}{\ell^\frac{1}{4}}+O(\ell^{-\frac{1}{2}}).
\end{equation*}
This leading term decays as $\ell^{-\frac{1}{4}}$. The oscillating part, the function $\ell\mapsto \sin(2\pi p\ell^\frac{1}{2})$, is periodic in $\ell^\frac{1}{2}$. But calculating the functions $\Phi_p(\ell)$ numerically, we see that they do not show such behavior for large $\ell$, namely, these functions do not decay and oscillate non-periodically in $\ell^\frac{1}{2}$. This means that trying to replace the series in (\ref{2.4}) or in (\ref{6.3}) by an integral like (\ref{6.5}) is likely not a proper way in studying the functions $\vp_p$ and $\Phi_p$.

One more property of the functions $\vp_p(\ell)$ is that they solve certain differential equation. We define the function
\begin{equation*}
u=u(l,\mu)=\sum\limits_{k\in\mathds{Z}}\frac{\sin\left( l\sqrt{k^2+\mu} -\frac{\pi}{4}\right)}{(k^2+\mu)^\frac{3}{4}}
\end{equation*}
and we see immediately that
\begin{equation*}
\vp_p(\ell)=\frac{\xi^\frac{1}{2}}{\pi} u\left(\frac{2\pi\ell^\frac{1}{2}}{\xi},p^2\xi^2\right).
\end{equation*}
By straightforward calculations we check that the function $u$ solves the equation
\begin{equation}\label{6.6}
\frac{\p\ }{\p l} \left(\frac{\p^2 u}{\p l\p\mu}+\frac{l}{2}u\right) - \frac{1}{4}u=0,\qquad \mu>0,\quad l\in\mathds{R}.
\end{equation}
We can also write various initial conditions for the function $u$ like
\begin{equation*}
u\big|_{\mu=0}=\sum\limits_{k\in\mathds{Z}} \frac{\sin\left(lk-\frac{\pi}{4}\right)}{k^\frac{3}{2}},
\quad u\big|_{l=0}=-\frac{1}{\sqrt{2}}\sum\limits_{k\in\mathds{Z}} \frac{1}{(k^2+\mu)^\frac{3}{4}}.
\end{equation*}
The issue how to sum these series is open. We can only say that the right hand in the first condition is a $2\pi$-periodic function and
the right hand side in the second condition is a positive monotone  function decaying as $\mu\to+\infty$. But here the main question is how to solve equation (\ref{6.6}) or, at least, how to study the behavior of the solutions for large $l$.

\section*{Acknowledgments}

The author thanks Yu.A.~Kordyukov for very stimulating discussions while working on this paper and an anonymous referee for useful remarks allowed to improve the initial version of the paper.

The reported study was funded by RFBR according to the research project no. 18-01-00046.


\begin{thebibliography}{99}


\bibitem{DT} B.E.J. Dahlberg, E. Trubowitz. \emph{A remark on two dimensional periodic potentials} //
Comment. Math. Helvetici {\bf 57}:1, 130--134  (1982).


\bibitem{SS05} M.M. Skriganov, A.V. Sobolev. \emph{Asymptotic bounds for spectral bands of periodic Schr\"odinger operators} //  St. Petersburg Math. J. {\bf 17}:1, 207--216 (2006).

\bibitem{P08} L. Parnovski. \emph{Bethe-Sommerfeld conjecture} //  Ann. H. Poincar\'e. {\bf 9}:3, 457--508  (2008).


\bibitem{HM98} B. Helffer,  A. Mohamed.  \emph{Asymptotics of the density of states for the
Schr\"odinger operator with periodic electric potential} // Duke Math. J. {\bf 92}:1, 1--60 (1998).

\bibitem{SS05-2} M.M.~Skriganov, A.V.  Sobolev. \emph{Variation of the number of lattice points in large balls} //  Acta Arithm.
    {\bf 120}:3, 245--267 (2005).


\bibitem{S85} M.M. Skriganov. \emph{Geometric and arithmetic methods in the spectral theory of multidimensional periodic operators} //  Proc. Steklov Inst. Math. {\bf 171}, 1--121 (1987).


\bibitem{Vel}  O. A. Veliev. \emph{Asymptotic formulas for the eigenvalues of a periodic Schr\"odinger operator and the Bethe-Sommerfeld conjecture} //  Funct. Anal. Appl. {\bf 21}:2, 87--100 (1987).


\bibitem{Kar} Y. Karpeshina. \emph{Perturbation theory for the Schr\"odinger operator with a periodic potential}. Lect. Notes in Math. {\bf 1663}. Springer, Berlin (1997).

\bibitem{K04} Y. Karpeshina. \emph{Spectral properties of the periodic magnetic Schr\"odinger operator in the high-energy region. Two-dimensional case} // Comm. Math. Phys.  {\bf 251}:3, 473--514  (2004).



\bibitem{M97} A. Mohamed. \emph{Asymptotic of the density of states for the Schr\"odinger
operator with periodic electromagnetic potential} // J. Math. Phys., {\bf 38}:8, 4023--4051 (1997).

\bibitem{PS01} L. Parnovski, A. Sobolev. \emph{On the Bethe-Sommerfeld conjecture for the polyharmonic operator} // Duke Math. J. {\bf 107}:2, 209--238  (2001).


\bibitem{BP09} G. Barbatis, L. Parnovski. \emph{Bethe-Sommerfeld conjecture for pseudo-differential perturbation} // Comm. Part. Diff. Equat.  {\bf 34}:4, 383--418  (2009).

\bibitem{PS10} L. Parnovski, A.V. Sobolev.  \emph{Bethe-Sommerfeld conjecture for periodic operators with strong perturbations} //
Invent. Math. {\bf 181}:3, 467--540  (2010).

\bibitem{Be} C.B.E. Beeken. \emph{Periodic Schr\"odinger operators in dimension two:
constant magnetic fields and boundary value problems}, PhD thesis, University of Sussex, Brighton (2002).




\bibitem{ET} P. Exner, O. Turek. \emph{Periodic quantum graphs from the Bethe-Sommerfeld perspective} // J. Phys. A. {\bf 50}:45, id 455201 (2017).


\bibitem{UMJ18} D.I. Borisov. \emph{On spectral gaps of a Laplacian in a strip with a bounded periodic pertutbation} // Ufa Math. J. {\bf 10}:2, 14--30 (2018).

\bibitem{TMF18} D.I. Borisov. \emph{On absence of gaps in a lower part of spectrum of Laplacian with frequent alternation of boundary conditions in strip} // Theor. Math. Phys. {\bf 195}:2,  690--703 (2018).


\bibitem{DFDE17} D.I. Borisov. \emph{On lacunas in the lower part of the spectrum of the periodic magnetic operator in a strip} // Contem. Math. Fundam. Direct. {\bf 63}:3, 373--391 (2017). (in Russian).



\bibitem{IMM18} D.I. Borisov. \emph{Gaps in the spectrum of the Laplacian in a band with periodic delta interaction} // Proc. Inst. Math. Mech. Ural Branch RAS. {\bf 24}:2,  46--53 (2018). (in Russian).



\bibitem{Kr} E. Kr\"atzel. \emph{Lattice Points}. Kluwer Academic Publishers, Dordrecht (1988).

\bibitem{Wa} G.N. Watson. \emph{A treatise of on the theory of Bessel functions}. Cambridge Univ. Press, Cambridge (1945).

\end{thebibliography}
\end{document}